\newtheorem{thm}{Theorem}
\newtheorem{lem}[thm]{Lemma}
\newtheorem{clm}[thm]{Claim}
\newtheorem{rem}[thm]{Remark}
\newtheorem{defn}[thm]{Definition}
\newtheorem{prop}[thm]{Proposition}
\newtheorem{theorem}{Theorem}                                                                                %
\newcommand{\h}[1]{\textbf{#1}} 
\newcommand\R{\mathbb R}
\newcommand\dist{\text{dist}}
\newcommand\na{{\nabla}}
\newcommand\wt{\widetilde}
\theoremstyle{definition}
\newcommand\fb[1]{\p \{ {#1} > 0 \}} 
\def\ds{\displaystyle}
\def\ds{\displaystyle}
\def\C{\mathcal C} 
\def\cal{\mathcal}
\def\p{\partial}
\def\Om{\Omega}
\def\D{\nabla}
\def\e{\varepsilon}
\def\div{\operatorname{div}}
\newcommand\ol{\overline}
\newcommand{\rred}[1]{\textcolor{black}{#1}}
\newcommand{\bblue}[1]{\textcolor{black}{#1}}
\begin{document}

\title[Quasilinear continuous casting problem]{Regularity for a quasilinear continuous casting problem }
\author[A.L. Karakhanyan]{Aram L. Karakhanyan}
\address[Aram L. Karakhanyan]{Maxwell Institute for Mathematical Sciences and
School of Mathematics, University of Edinburgh,
James Clerk Maxwell Building, Peter Guthrie Tait Road,
Edinburgh EH9 3FD,
United Kingdom.
}
\email{aram.karakhanyan@ed.ac.uk}

\thanks{2000 Mathematics Subject Classification. Primary 35R35, 35J92, 35B65. Keywords: 
Free boundary regularity, two phase, $p-$Laplace, Stefan problem, continuous casting.}
\thanks{The author was supported in part by EPSRC grant EP/K024566/1}

\begin{abstract}
In this paper we study the regularity of weak solutions to the continuous casting problem 
\begin{equation}
\div(|\nabla u|^{p-2}\na u-\h v\beta(u))=0\tag{$\sharp$}
\end{equation}
 for prescribed 
constant velocity $\h v$ and enthalpy $\beta(u)$ with jump discontinuity at $u=0$. We establish the following estimates: 
local log-Lipschitz $p>2$ for $u$ (and BMO for $\na u$) for two phase, Lipschitz 
$p>1$ for one phase and linear growth up-to boundary near the contact points. 
We also prove that the 
free boundary is continuous curve in the direction of $\h v$ in two 
spatial dimensions. The proof is based on a delicate argument exploiting 
Sard's theorem for $W^{2, 2+\eta}, \eta>0$ functions and circumventing the lack of 
comparison principle for the solutions of ($\sharp$). 
\end{abstract}

\maketitle

\baselineskip=16pt              

\section{Introduction}

Let $\Omega\subset \R^{N-1}$ be a bounded domain with $C^{1, \alpha}$ boundary. 
Denote  $\C_L=\Omega\times (0, L)\subset \R^N$ where $L>0$ is given. In what 
follows we denote the points in $\C_L$ by $X=(x, z)$ where $x\in\Omega, z\in(0,L)$.

Consider the steady state continuous casting problem in 
$\C_L$ with constant convection in the direction of the $z$-axis

\begin{equation}\label{mainpde}
\left\{
\begin{array}{lll}
\Delta_p u=\p_z\beta(u) & \text{in}\ \  \C_L, \\
u=m^+ &\text{on}\ \  \Omega\times\{L\},\\
u=-m^- &\text{on}\ \  \Omega\times\{0\},\\
u=g& \text{on}\ \  \partial \Omega\times (0, L),
\end{array}
\right.
\end{equation}
where $m^+, m^-$ are two positive constants.
The quasilinear degenerate elliptic operator 
$$\Delta_p u:=\div(|\nabla u|^{p-2}\nabla u),\quad  1<p<\infty$$
is called the $p-$Laplacian.
The boundary data $g$ on the lateral boundary of $\C_L$ is $C^{1, \alpha}$ regular for some $\alpha\in (0,1)$ and 
satisfies the compatibility conditions
\begin{equation}\label{2}
g(x,0)=-m^-, \quad g(x,L)=m^+ \quad \forall x\in \p\Omega.
\end{equation}

The enthalpy
$\beta=\beta(u)$ is defined as follows
\begin{eqnarray}\label{beta}
  \beta(s)=\left\{\begin{array}{lll}
                          a s &\textrm{if}  &s<0,\\
                          \in [0,\ell] &\textrm{if } &s=0,\\
                          a s+\ell &\textrm{if} &s>0.
                    \end{array}
                    \right.
\end{eqnarray}
Here $a>0, \ell>0$ are given constants. An equivalent definition of $\beta$, which will useful in the 
analysis of the equation $\Delta_p u=\p_z\beta(u)$, 
is $\beta(s)=as+H(s)$, with
$H(s)$ being the Heaviside function 
\begin{equation*}
H(s)=\left\{
\begin{array}{ll}
s, &\hbox{if}\  s>0, \\
0. &\hbox{if}\  s\le 0.
\end{array}
\right.
\end{equation*}
The equations of the form  
\begin{equation}\label{e-balance}
 \Delta_p u=\div[\h v\beta(u)]+f
\end{equation}
have a number of physical applications \cite{Bear}. 
One may  interpret $u(X)$
as the normalized temperature at a point $X\in \C_L$ whereas
$f$ accounts for sources and $\h v(X)$ is the velocity of convection.
(\ref{e-balance}) manifests the heat conservation of thermodynamical system with enthalpy $\beta(u)$
when the liquid phase has velocity $\h v$. The case of constant convection 
$\h v=e_N$ models the solidification of molten steel extracted at constant speed and is used 
intensively in steel production. We shall mainly focus on this case.

In order to study the problem mathematically we first formulate it in weak sense.
Let  $f$ be a given continuous function and $\h v$ a Lipschitz continuous vectorfield defined in the cylinder
$\C_L$. In what follows $W^{1, p}(\C_L), p>1$ denotes the Sobolev space of weakly differentiable functions $v\in L^{p}$ such that the weak derivatives of $v$ are in $L^p(\C_L)$. The subspace of $W^{1, p}$ of  functions with vanishing trace is denoted by $W^{1, p}_0.$

\smallskip

\begin{defn}\label{def-suka}
Let $\h v\in C^{0, 1}(\ol{\C_L})$ and $f$ continuous. Then $u\in W^{1,p}(\C_L)$
is said to be a weak solution of  (\ref{e-balance}) in $\C_L$ if
\begin{eqnarray}\label{D-id}
-\int_{\C_L}\beta(u)\h v\cdot \na\phi+\int_{\C_L}|\nabla u|^{p-2}\na u\na\phi=-\int_{\C_L}f\phi, \qquad \forall\phi\in C^\infty_0(\C_L).
\end{eqnarray}
 is satisfied.
Here $\beta $  is the maximal monotone graph given by (\ref{beta}).
\end{defn}

For given  function $g\in C^{1,\alpha}(\p\C_L), \alpha\in(0,1)$ we consider the
weak solutions to Dirichlet problem
\begin{align}\label{DP}
\left\{
   \begin{array}{lll}\tag{\textit{\bf DP}}
    \Delta_p u=\div [\h v \beta(u)]+f \hspace{5pt} &{\rm{in}} \ {\C}_L,\\
u(x,0)=-m^-\  &x\in \Om,\\
   u(x,L)=m^+\ & x\in \Om,\\
    u=g(X) & {\rm on } \ \Sigma=\partial\Omega\times (0,L).
   \end{array}
   \right.
\end{align}

\smallskip

\begin{defn}\label{DP-def} Let $\h v\in C^{0, 1}(\ol{\C_L})$.
A pair $(u, \eta)$ is said to be a weak solution to \eqref{DP}
if $u\in W^{1,p}(\C_L), \eta\in \beta(u)$, $u=g$ on $\Sigma:=\partial\Omega\times(0,L)$ (in the trace sense), $u(x,0)=-m^-, u(x,L)=m^+, x\in \Om$
and for any $\phi\in W_0^{1,p}(\C_L)$
\begin{eqnarray}\label{DP-id}
-\int_{\C_L}\eta\h v\cdot \na \phi+\int_{\C_L}|\na u|^{p-2}\na u\na\phi=-\int_{\C_L}f\phi.
\end{eqnarray}
\end{defn}
The condition $\h v\in C^{0, 1}(\ol{\C_L})$ on the convection $\h v$ is of technical nature and later will 
be replaced by a stronger one, namely $\h v= \h e_N$ which corresponds to the continuous casting problem.
\smallskip
Let $u^+=\max(u, 0), u^-=-\min(u, 0)$ so that $u=u^+-u^-$. If $\fb u$ is $C^1$ smooth then the following free boundary condition is satisfied 
\begin{equation}\label{Stefan-cond}
|\na u^+|^{p-2}\na u^+\cdot \nu^+-|\na u^-|^{p-2}\na u^-\cdot \nu^-=\ell \h v\cdot \nu^+
\end{equation}
where $\nu^+, \nu^-$ are the outer normals of $\{u>0\}\cap \C_L$ and $\{u<0\}\cap \C_L$, respectively, see \cite{Urbano} equation (5).
\begin{rem}\label{Holder}
 It is known that there is a unique weak solution of the problem such that 
$\|u\|_\infty\le M<\infty$, see Theorems 2 and 3 in  \cite{Urbano}. For the 
classical case $p=2$ we refer to   
\cite{Rod-rev} Theorem 4.14 where it is shown that  $u\in C^\alpha(\overline{\C_L})$ provided that $g\in C^\alpha(\partial \C_L)$.
\end{rem}
\section{Main results}

In this section we formulate our main results.
\smallskip
\begin{theorem}\label{thm-bmo}
Let $2<p<\infty$ and $u$ be a bounded weak solution to the 
equation $\Delta_p u=\partial_z(\beta(u))$. Then $u$ is locally in $BMO$ and 
consequently it is locally log-Lipschitz
continuous in $\C_L$.
\end{theorem}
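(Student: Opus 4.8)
The plan is to bootstrap from the known Hölder continuity of $u$ (Remark 1.6) to a BMO bound on $\nabla u$, and then deduce log-Lipschitz regularity for $u$ itself by integrating the BMO modulus. The equation $\Delta_p u = \partial_z(\beta(u))$ can be rewritten, using $\beta(u) = au + \ell H(u)$, as $\Delta_p u = a\,\partial_z u + \ell\,\partial_z H(u)$, where the last term is a measure supported on the free boundary $\partial\{u>0\}$; more precisely, in the distributional sense $\partial_z H(u)$ pairs against test functions like a surface term on $\{u=0\}$. So the right-hand side of the $p$-Laplace equation is, roughly, $a\partial_z u$ (an $L^p$ function once we have $u\in W^{1,p}$) plus a singular piece of codimension one. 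The key point is that this singular piece has the \emph{borderline} scaling for BMO: a jump of the flux across a hypersurface produces a gradient that grows like $\log(1/r)$ at worst, not like a power, which is exactly the log-Lipschitz threshold.

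First I would fix a ball $B_{2r}(X_0)\subset\subset\C_L$ and compare $u$ with the $p$-harmonic replacement $h$ solving $\Delta_p h = 0$ in $B_{2r}(X_0)$, $h = u$ on $\partial B_{2r}(X_0)$. Standard interior estimates for the homogeneous $p$-Laplacian (De Giorgi--Nash--Moser plus the $C^{1,\gamma}$ theory of Uraltseva/DiBenedetto/Tolksdorf) give that $\nabla h$ is Hölder continuous with the natural energy bounds, hence $h$ certainly satisfies a BMO-type decay: $\fint_{B_\rho}|\nabla h - (\nabla h)_{B_\rho}| \le C(\rho/r)^\gamma \fint_{B_{2r}}|\nabla h|$. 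The task is then to control $u - h$. Testing the weak formulations of $u$ and $h$ with $\phi = u - h\in W^{1,p}_0(B_{2r})$ and using the monotonicity inequality $(|\xi|^{p-2}\xi - |\zeta|^{p-2}\zeta)\cdot(\xi-\zeta)\ge c|\xi-\zeta|^p$ (valid for $p\ge 2$), one gets
\begin{equation*}
c\int_{B_{2r}}|\nabla(u-h)|^p \le \int_{B_{2r}}\bigl(a\,\partial_z u + \ell\,\partial_z H(u)\bigr)(u-h).
\end{equation*}
The term with $a\partial_z u$ is harmless: by Hölder and Sobolev it is bounded by $Cr\|\partial_z u\|_{L^p(B_{2r})}\|\nabla(u-h)\|_{L^p(B_{2r})}$, giving a gain of a power of $r$. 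The delicate term is $\ell\int \partial_z H(u)(u-h)$; after an integration by parts this becomes $-\ell\int_{\{u>0\}\cap B_{2r}} \partial_z(u-h)$, and since $\{u>0\}\cap B_{2r}$ has finite perimeter (or, crudely, since $|\{u>0\}\cap B_{2r}|\le Cr^N$), this is bounded by $C r^{N}$-type quantities times $\|\nabla(u-h)\|_{\infty}$ — here one must be careful and instead estimate $\int_{\{u>0\}\cap B_{2r}}|\partial_z(u-h)|\le |B_{2r}|^{1-1/p}\|\nabla(u-h)\|_{L^p(B_{2r})}$, again Young's inequality absorbing the gradient. Collecting terms yields $\fint_{B_{2r}}|\nabla(u-h)|^p \le C r^{p}$ in the favorable directions but only $\fint_{B_{2r}}|\nabla(u-h)|^p \le C$ from the free-boundary term — it is precisely this non-decaying, scale-invariant contribution that forces BMO rather than $C^{0,\gamma}$ for $\nabla u$ and log-Lipschitz rather than Lipschitz for $u$.

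Combining the Campanato-type decay for $\nabla h$ with the (non-decaying but bounded) comparison estimate for $\nabla(u-h)$ in a standard iteration over dyadic balls gives $\sup_\rho \fint_{B_\rho(X_0)}|\nabla u - (\nabla u)_{B_\rho(X_0)}| < \infty$, i.e. $\nabla u \in BMO_{\mathrm{loc}}(\C_L)$; the John--Nirenberg inequality then upgrades this to $\exp$-integrability of $\nabla u - (\nabla u)_{B_\rho}$. Finally, log-Lipschitz continuity of $u$ follows by the classical fact that a function whose gradient is BMO has oscillation controlled by $r\log(1/r)$: integrating along a segment from $X$ to $Y$ with $|X-Y|=r$ and using the BMO bound on the telescoping sum of averages $(\nabla u)_{B_{2^{-k}r}}$, whose increments are $O(\|\nabla u\|_{BMO})$ and of which there are $O(\log(1/r))$ many, gives $|u(X)-u(Y)|\le C r(1+\log(1/r))$. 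The main obstacle, and the place where the argument needs genuine care rather than bookkeeping, is the treatment of the free-boundary term $\ell\int\partial_z H(u)(u-h)$: one must justify the integration by parts without assuming any a priori regularity of $\partial\{u>0\}$ (the set $\{u>0\}$ is only known to have a representative of finite perimeter via the coarea formula applied to the Hölder-continuous $u$, or alternatively one works with a mollified Heaviside $H_\varepsilon$ and passes to the limit), and one must verify that the resulting surface/volume term is genuinely $O(r^{N})$ — equivalently $O(1)$ after dividing by $|B_r|$ — uniformly in $X_0$, which is what pins down the BMO (and no better) scaling.
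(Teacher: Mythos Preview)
Your overall architecture---$p$-harmonic replacement, the monotonicity inequality for $p>2$, Campanato decay for the replacement, and an iteration over dyadic scales---is exactly the paper's. The difference is in how you handle the right-hand side of the comparison estimate, and there you are working much harder than necessary.

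In the weak formulation (Definition~\ref{def-suka}) the $z$-derivative already sits on the test function: taking $\phi=u-h\in W^{1,p}_0(B_{2r})$ gives
\[
\int_{B_{2r}}\bigl(|\nabla u|^{p-2}\nabla u-|\nabla h|^{p-2}\nabla h\bigr)\cdot\nabla(u-h)
=\int_{B_{2r}}\beta(u)\,\partial_z(u-h),
\]
and $\beta(u)$ is a \emph{bounded} function, $|\beta(u)|\le a\sup|u|+\ell$. A single application of Young's inequality then yields $\int_{B_{2r}}|\nabla(u-h)|^p\le C r^N$ (and, by H\"older for $p>2$, the same in $L^2$). This is precisely what the paper does. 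There is no need to split $\beta(u)=au+\ell H(u)$, no need to make sense of $\partial_z H(u)$ as a surface measure, no finite perimeter, no mollified Heaviside, no regularity of $\partial\{u>0\}$. What you flag as ``the main obstacle, the place where the argument needs genuine care'' is in fact a one-line estimate once the weak form is written correctly; the discussion of coarea, finite perimeter, and passing to the limit in $H_\varepsilon$ is all avoidable. Your claim that the $a\,\partial_z u$ piece ``gains a power of $r$'' relative to the jump piece is also off: treated via the weak form, both the $au$ and the $\ell H(u)$ parts of $\beta(u)$ contribute at the same borderline order $r^N$, and it is exactly that scale-invariant contribution that pins the result at BMO.
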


Notice that in Theorem \ref{thm-bmo} the weak solution $u$ may change sign.
The condition $p>2$ is dictated  by the non-variational structure  of this equation. Indeed, 
as we shall see below (see Remark  \ref{non-var})  for $1<p<2$ 
our technique gives only H\"older continuity of $u$. 

\smallskip 

It is worthwhile 
to point out that for one phase problem, $p>2$,  the BMO estimate above implies 
a linear growth from free boundary, see Lemma \ref{Harnack}. 
However, the same conclusion holds for any $p>1$ as the next theorem shows.

\begin{theorem}\label{Lip}
Let $u$ be a non-negative bounded weak solution to (\ref{e-balance}) and $1<p<\infty$.
\begin{itemize}
\item[$\bf 1^\circ$]Then  $u$ \rred{grows linearly away from the free boundary $\partial \{u>0\}\cap \C_L$}, provided that $\h v \in \h L^{\infty}(\C_L, \R^N)$
 and $f\in C(\ol{\C_L})$. 
 This means that for every subdomain $D\subset \subset \C_L$ there is a constant $C$ depending only on 
 $N, p, a, L,\ell,  \dist(\overline D, \p\C_L), \|\h v\|_\infty, \|f\|_{C}$ such that 
 \[u(x)\le C|x-x_0|, \quad x\in D, x_0\in D\cap \fb u.\]
\item[$\bf 2^\circ$] \rred{Furthermore if $\h v=e_N$ then $u$ is locally Lipschitz continuous in $\C_L$.}
\end{itemize}
\end{theorem}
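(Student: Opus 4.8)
The plan for $1^\circ$ is a blow-up argument anchored at a free boundary point combined with a Harnack inequality for nonnegative weak solutions; for $2^\circ$ I would then feed the resulting linear growth into the interior gradient estimates for the equation that governs $u$ on its positivity set. Fix $D\subset\subset\C_L$ and set $d_0=\dist(\ol D,\p\C_L)$. First choose a measurable selection $\eta\in\beta(u)$, writing $\eta=au+\eta_0$ with $0\le\eta_0\le\ell$, so that $u$ is a genuine single-valued weak solution of $\div\bigl(|\na u|^{p-2}\na u-a\h v u-\h v\eta_0\bigr)=f$ in $\C_L$. For $x_0\in D\cap\fb u$ and $0<r<d_0/4$ rescale by $u_r(y)=u(x_0+ry)/r$ on $B_2$; a direct computation gives
\begin{equation*}
\div\bigl(|\na u_r|^{p-2}\na u_r-ar\,\h v_r\,u_r-\h v_r\,\eta_{0,r}\bigr)=r f_r\qquad\text{in }B_2,
\end{equation*}
with $\h v_r(y)=\h v(x_0+ry)$, $\eta_{0,r}(y)=\eta_0(x_0+ry)$, $f_r(y)=f(x_0+ry)$. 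The key point is that, since $\|u\|_\infty\le M$, the convection term stays bounded uniformly in $r$: $\|ar\h v_r u_r\|_{L^\infty(B_2)}=ar\|\h v\|_\infty\|u_r\|_{L^\infty(B_2)}\le a\|\h v\|_\infty M$, and likewise $\|\h v_r\eta_{0,r}\|_\infty\le\ell\|\h v\|_\infty$, $\|r f_r\|_\infty\le r\|f\|_\infty$. Hence the equation for $u_r$ satisfies Serrin's structure conditions with constants and inhomogeneous data bounded solely in terms of $N,p,a,\ell,\|\h v\|_\infty,M,\|f\|_\infty$, uniformly in $r$. Serrin's Harnack inequality (this is the role of Lemma~\ref{Harnack}) then gives $\sup_{B_{1/2}}u_r\le C\bigl(\inf_{B_{1/2}}u_r+1\bigr)$; since $u$ is continuous (Remark~\ref{Holder}) and nonnegative we have $u(x_0)=0$, so $\inf_{B_{1/2}}u_r=0$, and undoing the scaling yields $\sup_{B_{r/2}(x_0)}u\le Cr$ for all $r<d_0/4$. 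The estimate for all $x\in D$ then follows using $u\le M$ when $|x-x_0|$ is not small.

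For $2^\circ$ the structural point is that when $\h v=e_N$ the latent-heat constant drops out of the bulk equation: on the open set $\{u>0\}$ one has $\beta(u)=au+\ell$ and $\div(\ell e_N)=\p_z\ell=0$, so (writing also $\p_z u=\div(u e_N)$)
\begin{equation*}
\div\bigl(|\na u|^{p-2}\na u-a\,u\,e_N\bigr)=f\qquad\text{in }\{u>0\},
\end{equation*}
a quasilinear equation of $p$-Laplace type with bounded coefficient and bounded source, hence $u\in C^{1,\alpha}_{\mathrm{loc}}(\{u>0\})$ with the corresponding interior estimates (DiBenedetto, Tolksdorf, Lieberman). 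Now combine with $1^\circ$: fix $D'\subset\subset D\subset\subset\C_L$, put $\delta_0=\dist(D',\p D)$, take $x\in\{u>0\}\cap D'$ and $d$ comparable to $\min\{\dist(x,\fb u),\delta_0\}$ and small enough that $B_{2d}(x)\subset\{u>0\}$. If $\dist(x,\fb u)\le\delta_0$, the linear growth of $1^\circ$ applied at a nearest free boundary point $x_0\in D\cap\fb u$ gives $\sup_{B_d(x)}u\le Cd$, so the blow-up $v(y)=u(x+dy)/d$ solves an equation of the same type with bounded coefficient $ad$ and source $df(x+d\,\cdot\,)$ and satisfies $\|v\|_{L^\infty(B_1)}\le C$; the interior gradient estimate then gives $|\na u(x)|=|\na v(0)|\le C'$. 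If $\dist(x,\fb u)>\delta_0$, the same bound follows from the plain interior estimate on $B_{\delta_0/2}(x)$, where $0\le u\le M$. Thus $|\na u|\le C$ a.e.\ on $\{u>0\}\cap D'$; since $\na u=0$ a.e.\ on the level set $\{u=0\}$ and $\{u\le0\}=\{u=0\}$ (as $u\ge0$), we get $\|\na u\|_{L^\infty(D')}\le C$, i.e.\ $u\in C^{0,1}_{\mathrm{loc}}(\C_L)$.

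I expect $1^\circ$ to be the crux: one needs a Harnack inequality robust under the \emph{linear} blow-up $u\mapsto r^{-1}u(x_0+r\,\cdot\,)$, and a priori the rescaled solutions $u_r$, whose sup norms grow like $M/r$, threaten to destroy the structure constants. The cancellation that rescues the argument is special to the problem — the convection term acquires a factor $r$ that exactly offsets the $M/r$ growth of $\|u_r\|_\infty$ — and it relies on $\beta$ being affine off the jump; this is also why one is pushed toward the Harnack/blow-up route rather than a barrier comparison, since $(\sharp)$ lacks a comparison principle. Once $1^\circ$ is in place, $2^\circ$ is essentially routine, its only genuinely structural input being the vanishing of $\div(\ell e_N)$, which puts the one-phase bulk equation within the scope of the $p$-Laplace regularity theory.
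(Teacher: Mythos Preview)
Your argument is correct. For $1^\circ$ it takes a genuinely different route from the paper. The paper argues by contradiction and compactness: assuming the dyadic estimate $\sup_{B_{2^{-k-1}}(X)}u\le\max\bigl(C2^{-k},\tfrac12\sup_{B_{2^{-k}}(X)}u\bigr)$ fails along a sequence, it rescales by $S_j=\sup_{B_{2^{-k_j-1}}}u_j$ (not by the radius), uses $2^{-k_j}/S_j<1/j\to0$ to kill the right-hand side, and passes to a nonnegative $p$-harmonic limit $v_0$ with $v_0(0)=0$ and $\sup_{B_{1/2}}v_0=\tfrac12$, contradicting the strong maximum principle. Your approach scales by the radius $r$ and applies Harnack directly at each scale, the decisive point being your observation that $ar\,\h v_r\,u_r=a\,\h v(x_0+r\cdot)\,u(x_0+r\cdot)$ is bounded by $a\|\h v\|_\infty M$ independently of $r$. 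This is shorter and sidesteps the compactness machinery entirely --- in particular the rather delicate step in the paper where one must upgrade weak $W^{1,p}$ convergence to strong convergence of $|\na v_j|^{p-2}\na v_j$ in order to identify the limiting equation. The paper's route, in exchange, makes the underlying mechanism (the strong maximum principle for $\Delta_p$) explicit, whereas in your argument it is buried in the Harnack constant. For $2^\circ$ the two proofs are essentially identical. One small correction: your parenthetical ``this is the role of Lemma~\ref{Harnack}'' is misplaced --- that lemma is a $p>2$ gradient bound at free boundary points obtained from the BMO estimate of Lemma~\ref{lemma:coherence}, not the Serrin--Trudinger Harnack inequality you are actually invoking.
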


For $p=2$ the local regularity for two phase problem is discussed in \cite{K-2}, and \cite{K-R}.
The regularity of free boundary is more delicate, our main result here
states that if $N=2$ and $u$ is a Lipschitz continuous solution of  (\ref{DP}) and $\p_z u \geq 0$, then
the free boundary
is a  continuous graph in $z-$direction. In order to prove this result we first show that 
for suitable boundary data $g$ we have $\partial_z u\ge 0$.

\begin{prop}\label{cor-1}
Let $u\ge 0$ be a weak solution of \eqref{DP} in the sense of Definition \ref{DP-def}, $N=2<p<\infty, m^-=0$ 
and assume further that
\begin{equation}\label{g-cond}
\liminf_{z\to z_0}\frac{g(x,z)-g(x, z_0)}{z-z_0}\ge 0, \quad \forall x\in \p\Om, z_0\in [0,L],\quad \p_z g(X)=0, \ X\in \p \Sigma,
\end{equation}
where $g\in W^{2, 2+\eta_0}(\Sigma), \eta_0>0, $ and 
$\Sigma$ is the lateral boundary of $\C_L$.
Then $u$ is monotone nondecreasing in $z$ direction.
\end{prop}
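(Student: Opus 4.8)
The plan is to establish the sign of $\partial_z u$ by a translation-comparison argument in the $z$-direction, using the structure of the equation $\Delta_p u = \partial_z\beta(u) = a\,\partial_z u + \partial_z H(u)$. For small $h>0$ set $u_h(x,z) := u(x,z+h)$ on the shifted cylinder $\C_L^h := \Omega\times(-h,L-h)$, and compare $u_h$ with $u$ on the overlap $\Omega\times(0,L-h)$. Both satisfy the same PDE, and I would like to conclude $u_h \ge u$, which after letting $h\to 0$ gives $\partial_z u \ge 0$ in the appropriate (difference-quotient) sense, and then genuinely a.e. once we know $u\in W^{1,p}$. The boundary contributions are controlled by the hypotheses in \eqref{g-cond}: on $\Sigma$ the one-sided condition $\liminf_{z\to z_0}(g(x,z)-g(x,z_0))/(z-z_0)\ge 0$ is exactly monotonicity of $g$ in $z$, so $u_h \ge u = g$ on the lateral boundary of the overlap; on the bottom $\Omega\times\{0\}$ we have $u=-m^-=0$ (since $m^-=0$) while $u_h \ge 0$ because $u\ge 0$; on the top-overlap face $\Omega\times\{L-h\}$, $u_h = m^+ \ge u$ since $m^+ = \max u$ is the maximum boundary value and, by the maximum principle for \eqref{DP}, $u\le m^+$ throughout. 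The extra requirement $\partial_z g = 0$ on $\partial\Sigma$ is there to make $g$ (hence the solution) compatible along the edges so that the shifted boundary data glue correctly and no spurious jump is introduced at the corners.

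The genuine difficulty is that \eqref{e-balance} has \emph{no comparison principle} — the abstract remark in the excerpt flags precisely this, and it is why Sard's theorem for $W^{2,2+\eta}$ functions is invoked in the paper. The term $\partial_z H(u)$ is not monotone in $u$ in any naive way: subtracting the weak formulations for $u_h$ and $u$, the $p$-Laplacian part gives the usual monotone contribution $\int (|\nabla u_h|^{p-2}\nabla u_h - |\nabla u|^{p-2}\nabla u)\cdot\nabla\phi$, but the convection part produces $\int (H(u_h)-H(u))\,\partial_z\phi$, and $H(u_h)-H(u)$ does not have the sign of $u_h-u$ pointwise in a way that survives the divergence. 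So I cannot simply test with $\phi = (u-u_h)^+$ and win. The fix — and the technical heart of the argument — is to exploit that $H(u_h)-H(u) = H(u(\cdot+h))-H(u(\cdot))$ is itself (a.e.) a $z$-difference quotient of $H(u)$ up to the jump set, and to use Sard's theorem to assert that for a.e. level $c$ the set $\{u = c\}$ has measure zero and $u$ is, near such levels, regular enough that $H(u)$ is weakly differentiable with $\partial_z H(u) = \chi_{\{u>0\}}\partial_z u$ plus a nonnegative measure supported on the free boundary. Concretely: pick a regular value, localize so that the free boundary $\{u=0\}$ is a nice hypersurface transverse to $e_N$, and on that good set run the translation argument; the free-boundary condition \eqref{Stefan-cond} with $\h v = e_N$ shows the jump contributes with a favorable sign. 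The $W^{2,2+\eta_0}$ hypothesis on $g$, together with the interior regularity $u\in W^{2,2+\eta}$ promised earlier, is exactly what makes Sard's theorem applicable.

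The steps, in order, are: (i) record the a priori bound $0 \le u \le m^+$ from the maximum principle for \eqref{DP} and the boundary data; (ii) reformulate the desired conclusion as nonnegativity of the $z$-difference quotient $D_h^z u := (u(\cdot+h) - u(\cdot))/h$ on interior subcylinders, uniformly in small $h>0$; (iii) verify, using \eqref{g-cond} and $m^-=0$, that $u(\cdot+h) \ge u$ on the \emph{entire} boundary of the overlap cylinder $\Omega\times(0,L-h)$; (iv) subtract the weak formulations and handle the non-monotone convection term by the Sard-theorem localization, reducing to levels where $\{u=c\}$ is null and $H$ composed with $u$ is differentiable with the correct sign, so that testing against $(u-u_h)^+$ (extended by zero appropriately) yields $\int_{\{u>u_h\}}(|\nabla u_h|^{p-2}\nabla u_h - |\nabla u|^{p-2}\nabla u)\cdot\nabla(u_h-u) \le 0$, forcing $\{u>u_h\}$ to be null by strict monotonicity of $\xi\mapsto|\xi|^{p-2}\xi$; (v) conclude $u_h \ge u$ a.e., divide by $h$, let $h\to 0^+$, and use $u\in W^{1,p}$ to upgrade to $\partial_z u \ge 0$ a.e. in $\C_L$. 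I expect step (iv) — making the Sard-theorem replacement for the missing comparison principle rigorous, including the corner/edge compatibility that forces the $\partial_z g = 0$ condition on $\partial\Sigma$ — to be the main obstacle; everything else is boundary bookkeeping and the standard monotonicity inequality for the $p$-Laplacian.
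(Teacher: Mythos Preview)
Your translation--comparison plan is a natural first attempt, but step (iv) has a genuine gap and your description of how Sard's theorem enters does not match what is actually needed. Testing with $\phi=(u-u_h)^+$ gives, after subtracting the weak formulations,
\[
\int_{\{u>u_h\}}\big(|\nabla u|^{p-2}\nabla u-|\nabla u_h|^{p-2}\nabla u_h\big)\cdot\nabla(u-u_h)
=\int\big(\beta(u)-\beta(u_h)\big)\,\partial_z(u-u_h)^+.
\]
The linear part $a(u-u_h)\partial_z(u-u_h)^+$ is a full $z$--derivative and integrates to zero by your boundary observations, but the Heaviside contribution $\ell(\chi_{\{u>0\}}-\chi_{\{u_h>0\}})\,\partial_z(u-u_h)^+$ has no usable sign: on $\{u>u_h\}$ the factor $\chi_{\{u>0\}}-\chi_{\{u_h>0\}}$ is nonnegative but $\partial_z(u-u_h)$ is not. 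Your proposed Sard-based fix (``for a.e.\ level $c$ the set $\{u=c\}$ is null, so $H\circ u$ is weakly differentiable with $\partial_z H(u)=\chi_{\{u>0\}}\partial_z u$'') does not dispose of this term, and invoking the free-boundary condition \eqref{Stefan-cond} presupposes free-boundary regularity that is downstream of what you are proving.

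The paper's argument is structurally different. It regularises the operator, replacing $\Delta_p$ by $\div\big((\varepsilon^2+|\nabla u^\varepsilon|^2)^{(p-2)/2}\nabla u^\varepsilon\big)$, so that the equation becomes strictly elliptic for each fixed $\varepsilon>0$. In the positivity set $\{u^\varepsilon>0\}$ one then \emph{differentiates} the equation in $z$: the function $w=\partial_z u^\varepsilon$ solves a genuinely linear, uniformly elliptic equation $\div(A(X)\nabla w)=a\,\partial_z w$, to which the minimum principle applies directly. Sard's theorem is invoked not to tame the convection term, but---after establishing $u^\varepsilon\in W^{2,2+\eta}$ up to the boundary of $\{u^\varepsilon>\delta\}$ (this is where $g\in W^{2,2+\eta_0}$ and a reflection argument across $\Omega\times\{L\}$ enter, using $\partial_z g=0$ on $\partial\Sigma$)---to guarantee that $\partial\{u^\varepsilon>\delta\}$ is a regular curve for a.e.\ $\delta>0$, so that the trace of $w$ on it makes sense. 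One then checks $w\ge 0$ on each piece of $\partial(\{u^\varepsilon>\delta\}\cap\C_L)$ (level curve, lateral boundary via \eqref{g-cond}, top via Hopf's lemma), concludes $w\ge 0$ in $\{u^\varepsilon>\delta\}$ for all small $\delta$, and finally passes to the limit $\varepsilon\to 0$ using uniqueness of the weak solution. The two ideas you are missing are the $\varepsilon$--regularisation (which restores strict ellipticity and allows differentiation) and the reduction to a \emph{linear} minimum principle for $\partial_z u^\varepsilon$; Sard plays a supporting role, making the boundary of the super-level set regular enough to read off the sign of $\partial_z u^\varepsilon$ there.
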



Finally we formulate our main result concerning the regularity of free boundary in two spatial dimensions.
\begin{theorem}\label{FB}
 Let  $u$ be a nonnegative
weak solution to (\ref{DP}) in $\C_L, N=2<p, m^-=0$
such that $u$ is nondecreasing  in $z-$direction. Let $g\in W^{2, 2+\eta_0}(\Sigma), \eta_0>0, $ where 
$\Sigma$ is the lateral boundary of $\C_L$.
Then for any subdomain $D\subset \C_L$, $\Gamma(u)=\partial \{u>0\}\cap D$ is locally a continuous graph in $\h e_2-$direction.
\end{theorem}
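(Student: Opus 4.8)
\textbf{Proof proposal for Theorem \ref{FB}.}

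The plan is to exploit the monotonicity hypothesis $\p_z u\ge 0$ to represent the free boundary $\Gamma(u)=\p\{u>0\}\cap D$ as the level set $\{u=0\}$ viewed through its vertical fibers, and then upgrade this to a continuous graph using a Sard-type argument. Fix a point $X_0=(x_0,z_0)\in\Gamma(u)$. Since $u\ge 0$ and $u$ is nondecreasing in $z$, on the vertical segment $\{x_0\}\times(0,L)$ the function $z\mapsto u(x_0,z)$ is monotone, so $\{z:u(x_0,z)>0\}$ is an interval of the form $(\lambda(x_0),L]$ (using $m^-=0$, $m^+>0$, so the top is in the positive phase), and we set $\lambda(x_0)$ to be the infimum of that interval; this defines the candidate graph function $\Gamma(u)=\{(x,\lambda(x)):x\in\pi(D\cap\{u>0\})\}$ where $\pi$ is the projection onto $\Omega$. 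The core of the argument is then to prove that $\lambda$ is continuous; lower and upper semicontinuity will be handled separately.

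Lower semicontinuity of $\lambda$ is the softer half and follows from the linear (indeed Lipschitz) growth away from the free boundary supplied by Theorem \ref{Lip}: if $x_j\to x_*$ with $\lambda(x_j)\to \mu<\lambda(x_*)$, then points $(x_j,\mu+\e)$ lie in $\{u>0\}$ for large $j$ and $u$ is bounded below there by a fixed multiple of the distance to $\Gamma(u)$ (using interior Harnack-type control for the $p$-Laplacian perturbed by the bounded drift), forcing $u(x_*,\mu+\e)>0$, a contradiction. For upper semicontinuity I would argue by contradiction similarly, using that $\{u=0\}$ is relatively closed and that $\p_z u\ge 0$ prevents the positivity set from "reopening" below $\lambda$: a sequence of free boundary points $(x_j,\lambda(x_j))$ with $\lambda(x_j)\to\mu>\lambda(x_*)$ would produce, in the limit, a whole segment $\{x_*\}\times(\lambda(x_*),\mu)$ on which $u\equiv 0$ by continuity and monotonicity, yet nondegeneracy of $u$ from above (again Theorem \ref{Lip} applied at $(x_*,\mu)$) forces $u>0$ just above $\lambda(x_*)$, contradiction. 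These two halves give continuity of $\lambda$ on its domain, and hence that $\Gamma(u)$ is locally a continuous graph in the $\h e_2$-direction.

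The delicate point — and where the $W^{2,2+\eta_0}$ hypothesis on $g$ and Sard's theorem enter — is to rule out a degenerate scenario in which $\Gamma(u)$ fails to be the boundary of a genuine subgraph, i.e. in which $\{u>0\}$ has "vertical slabs" of empty interior or $\lambda$ takes the value $L$ on a set that is not relatively closed in $\Omega$. Here one uses that $u\in W^{2,2+\eta}_{\mathrm{loc}}$ for some $\eta>0$ (the interior regularity for $p>2$ solutions of the perturbed equation, cf. the discussion preceding Theorem \ref{thm-bmo}), so by Sard's theorem for $W^{2,2+\eta}$ functions almost every level $\{u=c\}$ is a $C^1$ curve away from a null set, and one can approximate $\Gamma(u)=\{u=0\}$ by these smooth level sets $\{u=c_k\}$, $c_k\downarrow 0$, each of which is a graph in $\h e_2$ by the strict monotonicity $\p_z u>0$ that holds on $\{u>0\}$ (itself a consequence of the strong maximum principle / Harnack inequality applied to the equation satisfied by $\p_z u$, legitimate in the open set $\{u>0\}$ where the equation is uniformly elliptic and $\beta$ is smooth). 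Passing to the limit $c_k\to 0$ in these graph representations and using the uniform continuity estimate from the previous paragraph yields that the limit $\Gamma(u)$ is itself a continuous graph.

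The main obstacle I anticipate is exactly this last limiting step: controlling the modulus of continuity of the graph functions for $\{u=c_k\}$ \emph{uniformly in $k$} as $c_k\to 0$, because the equation degenerates precisely on $\Gamma(u)$ and the drift term $\p_z\beta(u)$ carries a concentrated (measure) part across the free boundary, so standard elliptic estimates do not apply up to $\{u=0\}$. Circumventing this — presumably by combining the BMO/log-Lipschitz bound of Theorem \ref{thm-bmo} or the Lipschitz bound of Theorem \ref{Lip} with the one-sided nondegeneracy and the Sard-theorem selection of good levels — is the heart of the proof, and is where the "delicate argument circumventing the lack of comparison principle" advertised in the abstract must be deployed, since one cannot simply sandwich $u$ between barriers.
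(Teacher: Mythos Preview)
Your central mechanism has a genuine gap. The step you call ``lower semicontinuity of $\lambda$'' hinges on the assertion that near a free boundary point $u$ is ``bounded below there by a fixed multiple of the distance to $\Gamma(u)$.'' That is \emph{nondegeneracy}, not linear growth: Theorem~\ref{Lip} (and Lemma~\ref{Harnack}) give only the \emph{upper} bound $u(X)\le C\,\dist(X,\Gamma(u))$, never a lower bound. The paper in fact singles out nondegeneracy as unavailable for $p\neq 2$ (Section~\ref{gens}, second bullet: the Baiocchi-type transformation that yields nondegeneracy when $p=2$ breaks down for the $p$-Laplacian). So the contradiction you derive in the first half of paragraph~2 does not close, and everything built on it (the uniform continuity of the level-set graphs, the limit $c_k\to 0$) inherits the gap. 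A related problem: you claim the equation is ``uniformly elliptic'' on $\{u>0\}$ so that the strong maximum principle gives $\p_z u>0$ there --- but $\Delta_p$ degenerates at every critical point of $u$, regardless of the sign of $u$, so this assertion is false and strict monotonicity on $\{u>0\}$ is not available either.

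The paper's proof is entirely different and avoids nondegeneracy. It observes that (i) upper semicontinuity of the height function is automatic from continuity of $u$, and (ii) failure of lower semicontinuity would force the free boundary to contain a \emph{vertical} segment $I_0=\{x_0\}\times(a,b)$. On such a segment the outward normal $\nu^+$ to $\{u>0\}$ is horizontal, so $\h v\cdot\nu^+=e_2\cdot\nu^+=0$, and since $\p_z u=0$ on $I_0$ the Stefan condition \eqref{Stefan-cond} collapses to $|\na u|^{p-2}u_x=0$, i.e.\ $|\na u|=0$ on $I_0$. But from the positive side there is an interior touching ball at points of $I_0$, and the Hopf lemma in the Appendix (for $\Delta_p w-aw_z=0$) gives $|\na u|>0$ there --- a contradiction. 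No nondegeneracy, no approximation by smooth level sets, and no Sard's theorem enters this argument: the Sard step and the $W^{2,2+\eta}$ hypothesis on $g$ are used earlier, in the proof of Proposition~\ref{cor-1}, to \emph{produce} the monotonicity $\p_z u\ge 0$, which in Theorem~\ref{FB} is already a hypothesis.
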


The main difficulty in the proof is the lack of the ellipticity of the operator
$\Delta_p$. We circumvent this difficulty by a delicate argument based on an approximation of $u$ 
and Sard's theorem for $W^{2, 2+\eta}$ functions.  

\smallskip
\begin{rem}
Notice that if $g\ge 0$, i.e. we consider the one phase problem, then 
for $g\in C^{1, \alpha}, \alpha>0$ we cannot have strict monotone (i.e. strict inequality in \eqref{g-cond}) boundary condition \eqref{g-cond} because 
at the free boundary points on the lateral boundary $\Sigma=\partial\Omega\times(0, L)$ one has $\p_zg=0$ as $g=0$ is a minimal value.
\end{rem}

\begin{rem}
One can take more general boundary data and consider the following problem
\begin{align}
\left\{
   \begin{array}{lll}
    \Delta_p u=\div [\h v \beta(u)]+f \hspace{5pt} &{\rm{in}} \ {\C}_L,\\
u(x,0)=h_0(x)\ & x\in \Om,\\
   u(x,L)=h_L(x) \ & x\in \Om,\\
    u=g(X) \hspace{6pt} & {\rm on } \ \Sigma=\partial\Omega\times (0,L).
   \end{array}
   \right.
\end{align}
One can extend all of the results to this general case under suitable conditions on 
$\h v$ and $f$ and the boundary data $h_0, h_1$. For instance if $f=0$ and $(\h v\cdot e_N)\ge 0$ with $\h v\in C^{0, 1}(\C_L)$
then the free boundary is a continuous curve in the $z$ direction in two spatial dimensions.

\end{rem}

The paper is organised as follows: In Section \ref{sec-bmo} we prove some
BMO estimates by testing $u$ against its $p-$harmonic replacement in small balls. 
Theorem \ref{thm-bmo} will follow as a consequence of Lemma \ref{lemma:coherence}.
In Section \ref{sec-thm2} we prove Theorem \ref{Lip}. The argument is based on a 
dyadic scaling method.  The regularity of the free boundary in two spatial dimensions
is discussed in Section \ref{sec-thm3}. In Section \ref{sec-gurev} we prove that 
nonnegative  solutions $u$ have at most linear growth at the contact points where 
the free boundary touches the fixed boundary.   

We shall also sketch how one can extend the results to uniformly elliptic quasilinear equations in Section \ref{gens}.
The paper also contains the proofs of a version of Caccioppoli type estimate and Hopf's lemma included in the Appendix.
\section{Notations}

\begin{tabbing}
$C_0, C_1, C_D \dots$ \hspace{2.65cm}       \=\hbox{generic constants }\\
$\chi_D$            \>\hbox{the characteristic function of a set }$D \subset \mathbb R^N,\  N\geq 2$\\
$\overline \Omega$       \>\hbox{the closure of } $\Omega$\\
$\partial \Omega$        \>\hbox{the  boundary of }  $\Omega$ \\
$\nu$ \> outer unit normal  \\
$X=(x, z)\in \R^N$             \>$x=(x_1,\dots , x_{N-1}, 0) $\\
$\D u$   \> $\D u=(\p_{x_1}u, \p_{x_2}u, \dots, \p_{z}u), \ds
\p_{X_i}=\frac{\p}{\p X_i}, 1\le i\le N-1, \p_z =\frac{\p }{\p z}$\\
$(\na u)_{x_0,\rho}$\> $ (\na u)_{x_0,\rho}:=\fint_{B_\rho(x_0)}\nabla u$\\
$\C_L$         \>\hbox{the cylinder $\C_L=\Omega\times(0, L), L>0$ for some $\Omega\subset \R^{N-1}$}\\
$\Sigma$ \> \hbox{lateral boundary of $C_L$}, \  $\partial\Omega\times(0, L)$\\
$B_r(X)$   \qquad    \>$\{Y\in \R^N: |Y - X|<r\}$\\
$B_r $     \>$B_r(0)$
\end{tabbing}

\section{BMO estimate}\label{sec-bmo}
\begin{lem}[Continuity of weak solutions]\label{lemma:coherence}
Let~$u\in W^{1,p}(\C_L)$ be a solution of~\eqref{mainpde}. Then 
 for $p>2$, 
there exist $c>0$ and $B>0$ depending only on $a, \ell, p, N$ and $\sup_{C_L}|u|$ such that
$$\phi(r)\leq cr^{\frac N2}\left(\frac{\phi(R)}{R^{\frac N2}}+B\right),$$
for all $0<r\le R\le \dist(X_0, \partial\C_L)$,
where $$ \phi(r):=\sup\limits_{t\leq r} \|\na u-(\na u)_{X_0,t}\|_{L^2(B_t(X_0))}.
$$  and $X_0\in \fb u$.

In particular, we have that $\na u \in BMO(D)$, 
for any bounded subdomain $D\Subset\C_L$,
and thus $u$ is locally log-Lipschitz continuous.
Furthermore, $\na u\in L^q(D)$ for any $1<q<\infty$.
\end{lem}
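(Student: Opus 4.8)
The plan is to run a Campanato-type iteration centred at the free boundary point $X_0$. For $0<r\le R_0:=\dist(X_0,\p\C_L)$ write $B_r:=B_r(X_0)$ and let $h=h_r$ be the $p$-harmonic replacement of $u$ in $B_r$, i.e.\ the unique minimizer of $w\mapsto\int_{B_r}|\na w|^p$ over $u+W^{1,p}_0(B_r)$; then $\Delta_p h=0$ in $B_r$, $u-h\in W^{1,p}_0(B_r)$ and $\int_{B_r}|\na h|^p\le\int_{B_r}|\na u|^p$. The only external input is the classical $C^{1,\alpha}$-regularity theory for $p$-harmonic functions, which after rescaling yields the excess-decay estimate
\[ \int_{B_\rho}\bigl|\na h-(\na h)_{X_0,\rho}\bigr|^{2}\le C_0\Bigl(\tfrac{\rho}{r}\Bigr)^{N+2\alpha}\int_{B_r}\bigl|\na h-(\na h)_{X_0,r}\bigr|^{2},\qquad 0<\rho\le r, \]
with $\alpha=\alpha(N,p)\in(0,1)$ and $C_0=C_0(N,p)$; note that this ingredient does not see the right-hand side of the equation at all.

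Next I would estimate the comparison error $D(r):=\int_{B_r}|\na(u-h)|^{2}$. Testing the weak formulation of~\eqref{mainpde} (here $f=0$ and $\h v=\h e_N$) against $\phi=u-h\in W^{1,p}_0(B_r)$ and using the $p$-harmonicity of $h$ gives, with $\eta\in\beta(u)$,
\[ \int_{B_r}\bigl(|\na u|^{p-2}\na u-|\na h|^{p-2}\na h\bigr)\cdot\na(u-h)=\int_{B_r}\eta\,\p_z(u-h). \]
For $p\ge2$ the vectorial monotonicity inequality bounds the left-hand side from below by $c_p\int_{B_r}|\na(u-h)|^{p}$. Since $u-h\in W^{1,p}_0(B_r)$ one has $\int_{B_r}\p_z(u-h)=0$, so the right-hand side equals $\int_{B_r}(\eta-\bar\eta_r)\,\p_z(u-h)$, where $\bar\eta_r$ is the mean of $\eta$ over $B_r$; using $|\eta|\le a\sup_{\C_L}|u|+\ell$ and Hölder's inequality this is $\le 2(a\sup_{\C_L}|u|+\ell)\,|B_r|^{1/p'}\|\na(u-h)\|_{L^p(B_r)}$. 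Combining the two bounds gives $\|\na(u-h)\|_{L^p(B_r)}\le C\,r^{N/p}$ and then, since $p\ge2$, one more Hölder step yields $D(r)\le|B_r|^{1-2/p}\|\na(u-h)\|_{L^p(B_r)}^{2}\le C\,r^{N}$, with $C=C(N,p,a,\ell,\sup_{\C_L}|u|)$. The point is that this step uses nothing about $\eta=\beta(u)$ beyond its boundedness, so the jump of $\beta$ at the origin is harmless.

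Setting $E(t):=\int_{B_t}|\na u-(\na u)_{X_0,t}|^{2}$ and splitting $\na u-(\na u)_{X_0,\rho}$ into its $h$-part and its $(u-h)$-part, one controls the second by $\int_{B_\rho}|\na(u-h)|^{2}\le D(r)$ and the first, via the decay estimate above (after replacing $(\na h)_{X_0,r}$ by $(\na u)_{X_0,r}$ at the cost of another $D(r)$), by $C_0(\rho/r)^{N+2\alpha}(E(r)+D(r))$. This gives $E(\rho)\le C_1(\rho/r)^{N+2\alpha}E(r)+C_2r^{N}$ for $0<\rho\le r\le R_0$, and since $N+2\alpha>N$ the standard iteration lemma produces $E(t)\le Ct^{N}(R^{-N}E(R)+C_2)$ for $t\le R\le R_0$. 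Taking the supremum over $t\le r$ and recalling $\phi(r)^{2}=\sup_{t\le r}E(t)$ together with $E(R)\le\phi(R)^{2}$ yields $\phi(r)^{2}\le c^{2}r^{N}(R^{-N}\phi(R)^{2}+B^{2})$, which is the claimed inequality after taking square roots. For the remaining assertions, the same bound $E(r)\le Cr^{N}$ holds around every free boundary point, while away from $\fb u$ the right-hand side $\p_z\beta(u)$ reduces to $a\,\p_z u\in L^p$ and $u\in C^{1,\alpha}_{\mathrm{loc}}$ by standard theory; hence $\na u\in BMO(D)$ for every $D\Subset\C_L$. John--Nirenberg then gives $\na u\in L^q(D)$ for all $q<\infty$, and the embedding of $W^{1,BMO}$ into the class of log-Lipschitz functions gives the stated modulus of continuity for $u$.

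I expect the main obstacle to be the degeneracy of $\Delta_p$ in the comparison step: the coercivity $c_p\int|\na(u-h)|^{p}$ is available only for $p\ge2$; for $1<p<2$ the monotonicity inequality degenerates into a weighted $L^2$-bound, the scheme closes only in a weaker Morrey scale, and one is left with mere Hölder continuity of $u$, which is precisely why the hypothesis $p>2$ is needed (cf.\ Remark~\ref{non-var}). A secondary technical point is to keep the two Hölder exchanges in the second step at exactly the right exponents, so that $D(r)$ scales like the full volume $r^{N}$ and the iteration exponent $N+2\alpha$ genuinely exceeds $N$.
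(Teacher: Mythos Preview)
Your proposal is correct and follows essentially the same route as the paper: $p$-harmonic replacement, the monotonicity inequality for $p>2$ together with the boundedness of $\beta(u)$ to obtain $\int_{B_r}|\na(u-h)|^2\le Cr^N$, the Campanato decay for $p$-harmonic functions, and the standard iteration lemma followed by the $BMO\hookrightarrow$ log-Lipschitz embedding. The only cosmetic differences are that the paper uses Young's inequality (rather than your mean-subtraction trick plus H\"older) in the comparison step, and it does not split into on/off the free boundary---the estimate is uniform in $X_0$ since only $|\beta(u)|\le aM+\ell$ is used.
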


\begin{proof} Fix~$R\ge r>0$ and $x_0\in D$ such that $B_{2R(x_0)}\Subset D$.
Let $v$ be the solution of 
$$ \left\{\begin{array}{ll}
\Delta_p v=0 & {\mbox{ in }} B_{2R}(X_0),\\
v=u & {\mbox{ on }} \partial B_{2R}(X_0). 
\end{array}
\right.$$
From Definition \ref{def-suka}
we have

\begin{eqnarray*}
\int_{B_{2R(X_0)}} |\nabla u|^{p-2}\na u(\na u-\na v)&=&\int_{B_{2R(X_0)}}\beta(u)(u_z- v_z),\\\nonumber
\int_{B_{2R(X_0)}}|\nabla v|^{p-2}\na v(\na u-\na v)&=&0.
\end{eqnarray*}
After subtracting the second equation from the first one we obtain 

\begin{eqnarray}\label{eq-10}
\int_{B_{2R(X_0)}} \left(|\nabla u|^{p-2}\na u-|\na v|^{p-2}\na v\right)(\na u-\na v)=\int_{B_{2R(X_0)}}\beta(u)(u_z-v_z).\\\nonumber
\end{eqnarray}
Recall that by Lemma 5.7 \cite{MZ} there is a generic constant $\mu>0$ depending only on $p$ and $N$ such that 
\begin{equation}\label{coercive}
(|\xi|^{p-2}\xi-|\eta|^{p-2}\xi)(\xi-\eta)\ge \mu\left\{
\begin{array}{lll}
|\xi-\eta|^p &\text{if}\ p>2,\\
|\xi-\eta|^2(|\xi|+|\eta|)^{p-2} &\text{if}\ 1<p\le 2, 
\end{array}
\right.
\end{equation}
for all $\xi, \eta\in \R^d$.
Hence 
\begin{eqnarray}
\mu \int_{B_{2R}{(X_0)}}|\na u-\na v|^p&\le& \frac{\e^p}p\int_{B_{2R}(X_0)}|\na u -\na v|^p+\frac1{\e^{p'}p'}\int_{B_{2R}(X_0)}|\beta(u)|^{p'}\\\nonumber
&\le& \frac{\e^p}p\int_{B_{2R}(X_0)}|\na u -\na v|^p+\frac1{\e^{p'}p'}(\ell+aM)^{p'}2^NR^N\omega_N\\\nonumber
\end{eqnarray}
where $M=\sup_{\C_L}|u|$.
Consequently, we get that 
\begin{eqnarray}
\int_{B_{2R}(X_0)}|\na u-\na v|^p\leq CR^N,
\end{eqnarray}
where 
$$C=\frac{(\ell+aM)^{p'}2^N\omega_N}{(\mu-\e^p/p){\e^{p'}p'}}.$$
We infer 
the estimate 
\begin{eqnarray}\label{est-RN-p}
\int_{B_{2R}(X_0)}|\na u-\na v|^2\leq CR^N,
\end{eqnarray}
with some tame constant $C>0$.

Indeed, as  $p>2$ we have by H\"older's inequality 
$$\left(\fint_{B_{2R}(X_0)}|\na u-\na v|^p\right)^{\frac1p}\geq 
\left(\fint_{B_{2R}(X_0)}|\na u-\na v|^2\right)^{\frac12}$$
and \eqref{est-RN-p} follows.

Furthermore, for any~$\rho>0$, we set 
$$ (\na u)_{X_0,\rho}:=\fint_{B_\rho(X_0)}\nabla u.$$
Then, from  H\"older's inequality we have 
\begin{equation}\begin{split}\label{starqew}
|(\na v)_{X_0,r}-(\na u)_{X_0,r}|^2 \le\,&
\left(\fint_{B_r(X_0)}|\nabla v-\nabla u|\right)^2\\
\le\,& \fint_{B_r(X_0)}|\nabla v-\nabla u|^2.
\end{split}\end{equation}

We would also need the following estimate for a $p-$harmonic function $v$:
there is $\alpha>0$ such that for all balls $B_{2R(X_0)}\Subset D$, 
with~$R\ge r>0$, there exists a universal constant $c>0$ 
such that the following Companato type estimate is valid
\begin{equation}\label{Companato-00}
\fint_{B_r(X_0)}|\na v-(\na v)_{X_0, r}|^2\leq c \left(\frac rR\right)^{\alpha}
\fint_{B_R(X_0)}|\na v-(\na v)_{X_0, R}|^2.
\end{equation}
See  \cite{DiB-M} Theorem 5.1. 

Denote $\|\cdot \|_{L^2({B_r(X_0)})}=\|\cdot \|_{2,r}$, then, 
using~\eqref{starqew}, we obtain 
\begin{eqnarray}\nonumber
\|\na u-(\na u)_{X_0,r}\|_{2,r}&\le& \|\na u-\na v\|_{2,r}+\|\na v-(\na v)_{X_0,r}\|_{2,r}\\\nonumber&&
+\,\|(\na v)_{X_0,r}-(\na u)_{X_0,r}\|_{2,r}\\\nonumber
&\le& 2\,\|\na u-\na v\|_{2,r}+\|\na v-(\na v)_{X_0,r}\|_{2,r}\\\label{Campanato}
&\le& 2\,\|\na u-\na v\|_{2,r}+C\left(\frac rR\right)^{\frac{N+\alpha}2}\|\na v-(\na v)_{X_0,R}\|_{2,R},\\\nonumber
\end{eqnarray}
where, in order to get \eqref{Campanato}, we used Campanato type estimate \eqref{Companato-00}.

From the triangle inequality for $L^2$ norm  we have
\begin{eqnarray*}
\|\na v-(\na v)_{X_0,R}\|_{2,R}\leq 2\,\|\na u-\na v\|_{2,R} +\|\na u-(\na u)_{X_0,R}\|_{2,R},
\end{eqnarray*}
and so, combining this with \eqref{est-RN-p}, we obtain 
\begin{eqnarray*}
\|\na u-(\na u)_{X_0,r}\|_{2,r}&\le& 2\,\|\na u-\na v\|_{2,r}\\\nonumber&&
+C\left(\frac rR\right)^{\frac{N+\alpha}2}\big[2\|\na u-\na v\|_{2,R} +\|\na u-(\na u)_{X_0,R}\|_{2,R}\big]\\\nonumber
&\le& C\left\{\|\na u-\na v\|_{2, R}+
\left(\frac rR\right)^{\frac{N+\alpha}2}
 \|\na u-(\na u)_{X_0,R}\|_{2, R}\right\}\\\nonumber
&\le& A\left(\frac rR\right)^{\frac{N+\alpha}2}
 \|\na u-(\na u)_{X_0,R}\|_{2, R}+B R^{\frac{N}2},\\\nonumber
\end{eqnarray*}
for some tame positive constants $A$ and $B$.

Introduce 
$$ \phi(r):=\sup\limits_{t\leq r} \|\na u-(\na u)_{X_0,t}\|_{2,t},
$$ 
then the former inequality can be rewritten as 
$$\phi(r)\le A\left(\frac rR\right)^{\frac{N+\alpha}2}\phi(R)+B R^{\frac N2},$$
with some positive constants $A, B, \alpha$. Applying
Lemma 2.1 from \cite{Giaq} Chapter~3, we  conclude that 
there exist~$R_0>0$ and~$c>0$ such that
$$\phi(r)\leq cr^{\frac N2}\left(\frac{\phi(R)}{R^{\frac N2}}+B\right),$$
for all $r\le R\le R_0$,
and hence
$$ \int_{B_r(X_0)}|\na u-(\na u)_{X_0,r}|^2 \le C r^N, $$
for some tame constant $C>0$.
This shows that~$\nabla u$ is locally BMO.
The log-Lipschitz estimate for $p>2$ now follows from 
\cite{Cianchi} Theorem 3.  
\end{proof}

\begin{rem}\label{non-var}
If $1<p<2$ then using the equation \eqref{eq-10} and inequality \eqref{coercive} in conjunction with the comparison of $u$ with its $p-$harmonic replacement  
in a small ball ${B_{2R}(X_0)}$ centred at a free boundary point,  renders the following inequality 
$$\int_{B_{2R}(X_0)}\frac{|\na u-\na v|^2}{\left(|\na u|+|\na v|\right)^{2-p}}\lesssim \int_{B_{2R}(X_0)}|u_z-v_z|\le \left(\int_{B_{2R}(X_0)}|\na u-\na v|^p\right)^{\frac1p}|B_{2R}|^{1-\frac1p}.$$
Consequently, setting $\sigma=\frac{p(2-p)}2>0$ and  applying H\"older's inequality we infer the estimate 
\begin{eqnarray*}
\int_{B_{2R}(X_0)}|\na u-\na v|^p&=&\int_{B_{2R}(X_0)}\frac{|\na u-\na v|^p}{\left(|\na u|+|\na v|\right)^{\sigma}}\left(|\na u|+|\na v|\right)^\sigma\\\nonumber
&\le& \left(\int_{B_{2R}(X_0)}\frac{|\na u-\na v|^2}{\left(|\na u|+|\na v|\right)^{\frac{2\sigma}p}}\right)^{\frac p2}
\left(\int_{B_{2R(X_0)}}(|\na u|+|\na v|)^{\frac{2\sigma}{2-p}}\right)^{\frac{2-p}2}\\\nonumber
&=& \left(\int_{B_{2R}(X_0)}\frac{|\na u-\na v|^2}{\left(|\na u|+|\na v|\right)^{2-p}}\right)^{\frac p2}
\left(\int_{B_{2R(X_0)}}(|\na u|+|\na v|)^p\right)^{1-\frac p2}\\\nonumber
&\lesssim&  \left( \left(\int_{B_{2R}(X_0)}|\na u-\na v|^p\right)^{\frac1p}|B_{2R}|^{1-\frac1p}\right)^{\frac p2}\left(\int_{B_{2R(X_0)}}|\na u|^p\right)^{1-\frac p2}
\end{eqnarray*}
which yields the estimate 
\begin{eqnarray*}
\int_{B_{2R}(X_0)}|\na u-\na v|^p\lesssim |B_{2R}|^{{p-1}} \left(\int_{B_{2R(X_0)}}|\na u|^p\right)^{2-p}.
\end{eqnarray*}
Then the Caccioppoli type inequality from \cite{Gary-0} and the technique above 
give that $u$ is H\"older continuous.
\end{rem}

\section{Proof of Theorem \ref{Lip}}\label{sec-thm2}
{$\bf 1^\circ$} We show that for any compact set $K\subset\subset\C_L$
there exists a tame constant $C$, depending on $\dist(K, \p\C_L)$
such that
\begin{eqnarray*}
 \sup_{B_{2^{-k-1}}(X)} u\leq \max\left(C2^{-k}, \sup_{B_{2^{-k}}(X)} u\right),
\qquad \forall X\in K\cap\partial\{u>0\}.
\end{eqnarray*}

If this inequality is false then
there exist a sequence of weak solution $u_j$ such that $0\leq u_j\leq M$ for some
fixed constant $M>0$, a sequence $\{k_j\}\subset \mathbb N,
X_j\in K\cap \partial\{u_j>0\} $
such that 
\begin{eqnarray}\label{est}
 \sup_{B_{2^{-k_j-1}}(X)} u_j > \max\left(j2^{- k_j}, \frac 12 \sup_{B_{2^{-k_j}}(X_j)} u_j\right).
\end{eqnarray}
Consider  the scaled functions $$\ds v_j(X)=\frac{u_j(X_j+2^{-k_j}X)}{S_j},$$
where $S_j=\ds \sup_{B_{2^{-(k_j+1)}}(X_j)}u_{\rred j}.$ It is obvious that 
\begin{equation}\label{eq-zero}
v_j(0)=0, 
\end{equation}
Moreover, it follows from
(\ref{est}) that \begin{eqnarray}\label{est-1}
                 \frac{2^{-k_j}}{S_j}<\frac1j, \qquad \sup_{B_{\frac12}} v_j \geq \frac12, \qquad 0\leq v_j(X)\leq 2,\ \  X\in B_1.
                 \end{eqnarray}
Since, by assumption, the weak solutions $u_j$ are bounded it follows from
(\ref{est}) that $M>j2^{-k_j}$ implying that $k_j\rightarrow \infty.$

\smallskip

If $u_j$ solves  (\ref{e-balance}), then from the scale invariance properties of $\Delta_p$ it follows that 
$v_j$ solves the following equation
\begin{eqnarray}\label{scl-pde}
 \div(|\na v_j|^{p-2} \D v_j)&=&\frac{2^{-pk_j}}{S_j^{p-1}} ({\Delta}_p u_j)(X_j+2^{-k_j}X)\\\nonumber
 &=&
\left[\frac{2^{-k_j}}{S_j}\right]^{p-1}\div [\upbeta(v_j)\h v(X_j+2^{-k_j}X)]+f_j\\\nonumber
&\equiv&\div \h F_j+f_j,
\end{eqnarray}
where 
\begin{eqnarray*}
\h F_j&=&\left[\frac{2^{-k_j}}{S_j}\right]^{p-1} \upbeta(v_j)\h v(X_j+2^{-k_j}X), \\\nonumber
f_j&=&\frac{2^{-pk_j}}{S_j^{p-1}} f(X_j+2^{-k_j}X)=S_j\left[\frac{2^{-k_j}}{S_j}\right]^p f(X_j+2^{-k_j}X).
\end{eqnarray*}

From  $\h v\in \h L^\infty(\C_L, \R^N)$ we obtain,  using   (\ref{est-1}), definition of
$S_j$ and (\ref{beta}),
the inequality
$$|\h F_j|\leq \left[\frac{2^{-k_j}}{S_j}\right]^{p-1}
\upbeta(2)\sup|\h v|\leq \left[\frac{1}{j}\right]^{p-1}\upbeta(2)\sup|\h v|\rightarrow 0.$$
Similarly we obtain $\sup\limits_{B_1}|f_j(X)|\rightarrow 0$.

\smallskip

From the Caccioppoli inequality (see Appendix) it follows
that $\{v_j\}$ is bounded in $W^{1,p}(B_{\frac34})$. 
Furthermore, utilizing  (\ref{scl-pde}), \eqref{est-1} and Serrin's 
theorem for quasilinear divergence form elliptic operators \cite{Serrin}, 
we infer  that  the sequence $\{v_j\}$ is uniformly H\"older continuous in
$B_{3/4}$. 
Now employing  a customary compactness argument and the
estimates for $\{\h F_j\}$ and $\{f_j\}$,  we can extract a subsequences $j_m$ such that $X_{j_m}\to X_0$,
$\{v_{j_m}\}\subset \{v_j\}$ which uniformly converges to some
$v_0$
in $B_{\frac34}$ and weakly in $W^{1,p}(B_\frac 34)$. Moreover, we can check that 
$$-\int |\na v_0|^{p-2}\D v_0 \D \phi\longleftarrow -\int |\D v_{j_m}|^{p-2}\D v_{j_m} \na \phi=\int f_{j_m}\phi-\h F_{j_m}\cdot \na\phi\longrightarrow 0,
\qquad \forall \phi \in C_0^\infty(B_{\frac 34}).$$
To see this we first prove 
\begin{clm}\nonumber
For every $q\ge p$ there is a tame constant $\gamma$ independent of $j_m$ such that 
\[\|\nabla v_{j_m}\|_{L^{q}(B_{\frac34})}\le \gamma, \quad \|\nabla v_0-\nabla v_{j_m}\|_{L^{q}(B_{\frac34})}\to 0.\]
\end{clm}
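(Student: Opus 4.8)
The plan is to establish the claim by exploiting the fact that the source terms $f_{j_m}$ and $\mathbf{F}_{j_m}$ in \eqref{scl-pde} converge to zero in $L^\infty(B_{3/4})$, so that each $v_{j_m}$ is close to being $p$-harmonic, and then invoke the interior higher integrability of the gradient for $p$-harmonic-type equations. First I would record that by the Caccioppoli inequality and the uniform bounds $0\le v_{j_m}\le 2$ on $B_1$, the sequence $\{v_{j_m}\}$ is bounded in $W^{1,p}(B_{3/4})$ with a tame constant. Then, since $v_{j_m}$ solves a uniformly controlled equation of $p$-Laplacian type with right-hand side $\div \mathbf{F}_{j_m}+f_{j_m}$ whose data are uniformly bounded (indeed go to $0$), the reverse H\"older / Gehring–Giaquinta–Modica self-improving estimate for the gradient (see e.g. \cite{Giaq} or the standard interior regularity theory cited via \cite{DiB-M}) yields the existence of $\delta>0$ and a tame constant $\gamma_0$ with $\|\nabla v_{j_m}\|_{L^{p+\delta}(B_{2/3})}\le \gamma_0$. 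This is only an improvement by a fixed exponent $\delta$, so to reach an arbitrary exponent $q\ge p$ I would bootstrap: the point is that the source terms are not merely in $L^p$ but in $L^\infty$ with vanishing norm, so one may iterate the reverse H\"older inequality on a chain of shrinking balls $B_{r_k}$, each step multiplying the integrability exponent by the fixed factor coming from Gehring's lemma, until the desired $q$ is attained on $B_{3/4}$; the constant remains tame because at every stage the data bound is $\le 1$.

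Second, I would prove the convergence $\|\nabla v_0-\nabla v_{j_m}\|_{L^q(B_{3/4})}\to 0$. The natural route is to first obtain strong convergence in $W^{1,p}$ and then upgrade to $L^q$ using the uniform $L^{q'}$ bound just established for any $q'>q$, via interpolation / uniform integrability (Vitali's theorem). For the strong $W^{1,p}$ convergence I would test the equations for $v_{j_m}$ and for the limit $v_0$ (which, as noted in the excerpt, is $p$-harmonic in $B_{3/4}$) against $\phi=(v_{j_m}-v_0)\psi$ for a cutoff $\psi\in C_0^\infty(B_{3/4})$, subtract, and use the monotonicity inequality \eqref{coercive}: the left side controls $\int \psi\,|\nabla v_{j_m}-\nabla v_0|^p$ (for $p>2$), while the right side consists of the source terms $f_{j_m},\mathbf{F}_{j_m}$ paired against a bounded test function (these tend to $0$ since $\|f_{j_m}\|_\infty,\|\mathbf{F}_{j_m}\|_\infty\to 0$), plus terms involving $\nabla\psi$ and the \emph{uniform} (in $j_m$) convergence $v_{j_m}\to v_0$ together with weak $W^{1,p}$ convergence, which also go to $0$. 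Hence $\nabla v_{j_m}\to\nabla v_0$ in $L^p_{\mathrm{loc}}(B_{3/4})$, and after a covering argument on $B_{3/4}$, globally there. Finally, for each fixed $q$, pick $q'\in(q,\infty)$ with the uniform bound $\sup_m\|\nabla v_{j_m}\|_{L^{q'}}\le\gamma$; then $\{|\nabla v_{j_m}-\nabla v_0|^q\}$ is uniformly integrable, and combined with convergence in measure (from $L^p$ convergence) this gives convergence in $L^q(B_{3/4})$.

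The main obstacle I expect is the bootstrap to arbitrary integrability $q\ge p$. The bare Gehring argument only gains a fixed exponent $\delta$ determined by the structural constants $N,p,\mu$, and iterating it requires care: each reverse H\"older step shrinks the ball, so to land on $B_{3/4}$ one must start from a fixed larger ball (say $B_{7/8}\Subset B_1$) and control the number of iterations in terms of $q$ only, while tracking that the accumulated multiplicative constants stay tame — this is where the hypothesis $\|f_{j_m}\|_\infty,\|\mathbf{F}_{j_m}\|_\infty\le 1$ (for large $m$) is essential, since it prevents the source contributions from blowing up under iteration. An alternative, possibly cleaner, is to use that $v_{j_m}-w_{j_m}$ is small in $W^{1,p}$, where $w_{j_m}$ is the $p$-harmonic replacement of $v_{j_m}$ on $B_{7/8}$ (the comparison estimate of Lemma \ref{lemma:coherence}'s proof applies verbatim with $\beta(u)$ replaced by the vanishing data), and then quote the known $W^{1,q}_{\mathrm{loc}}$ regularity of $p$-harmonic functions for every $q<\infty$ (indeed $w_{j_m}\in C^{1,\alpha}_{\mathrm{loc}}$, \cite{DiB-M}); transferring this to $v_{j_m}$ costs only the $W^{1,p}$-small error, which combined with the higher-integrability of $w_{j_m}$ and one more Gehring step gives the uniform $L^q$ bound directly. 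I would present the argument in this second form to keep the iteration bookkeeping to a minimum.
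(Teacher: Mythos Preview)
Your strategy for the convergence half is essentially sound (indeed more careful than the paper's), but the heart of the Claim is the \emph{uniform} $L^q$ bound for arbitrary $q\ge p$, and here both routes you propose fall short. Gehring's lemma yields only a fixed gain $\delta=\delta(N,p)>0$ from $L^p$ to $L^{p+\delta}$; to iterate you would need a reverse H\"older inequality at the new exponent $p+\delta$, but the $p$-Laplacian structure produces Caccioppoli estimates only at level $p$, so the bootstrap stalls after one step regardless of how small the data are. Your alternative via the $p$-harmonic replacement $w_{j_m}$ does not rescue this: the comparison estimate controls only $\|\nabla v_{j_m}-\nabla w_{j_m}\|_{L^p}$, and writing $\nabla v_{j_m}=\nabla w_{j_m}+(\nabla v_{j_m}-\nabla w_{j_m})$ gives the first summand in $L^q$ but the second merely in $L^p$; ``one more Gehring step'' cannot promote an $L^p$-small perturbation to $L^q$ for large $q$.

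What the paper does instead is a nonlinear Calder\'on--Zygmund argument. First it absorbs $f_j$ into divergence form by solving $\Delta h_j=f_j$ in $B_1$ with zero boundary data, so that $\Delta_p v_j=\div\h H_j$ with $\h H_j=\h F_j+\nabla h_j$ and $\|\h H_j\|_{L^\infty}\to 0$. Setting $\mathcal F_j=|\h H_j|^{\frac1{p-1}-1}\h H_j$ recasts the equation as $\Delta_p v_j=\div(|\mathcal F_j|^{p-2}\mathcal F_j)$ with $\mathcal F_j\in L^\infty$. In this precise structural form one can invoke Theorem~1.2 of DiBenedetto--Manfredi \cite{DiB-M}, which gives directly
\[
\|\nabla(\zeta v_j)\|_{L^q(\R^N)}\le \gamma\big(\|\zeta\mathcal F_j\|_{L^q(\R^N)}+\|\zeta v_j\|_{L^p(\R^N)}\big)
\]
for \emph{every} $q\ge p$, with $\gamma$ tame. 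This is the missing ingredient: a genuine $L^p\to L^q$ estimate for $p$-Laplacian type equations, not accessible through bare Gehring iteration. One minor further point: in your convergence step you invoke that $v_0$ is $p$-harmonic, but in the paper's logic this is exactly what the Claim is being used to establish; fortunately the monotonicity argument you outline goes through without that hypothesis, testing only the equation for $v_{j_m}$ and using the weak $W^{1,p}$ convergence to $v_0$.
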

\begin{proof}
\bblue{Indeed, let us define $h_j\in W^{1, 2}_0(B_1)$ as the solution of the following homogeneous Dirichlet problem 
\[\Delta h_j=f_j\quad \hbox{in}\ B_1, \quad h_j=0\ \ \hbox{on} \ \partial B_1.\] 
From the a priori bound (3.12) in \cite{GT} it follows 
that $$\sup_{B_1}|h_j|\le C\sup_{B_1}|f_j|$$
with some tame constant $C>0$.
Observe that   $h_j$ is the convolution of $f_j$ and the Green function of $B_1$. 
Using the estimates for the Green potentials and the fact that $\sup\limits_{B_1}|f_j(X)|\rightarrow 0$
it follows that 
\[\|h_j\|_{C^{1, \sigma}(B_{1})}\to 0\ \ \hbox{as}\ j\to \infty\]
for any $\sigma\in (0, 1)$, see estimates (4.45) and (4.46) in \cite{GT}. Let $\h H_j=\h F_j+\nabla h_j$ and define 
$\mathcal F_j=|\h H_j|^{\frac1{p-1}-1}\h H_j$ then 
\[\Delta_p v_{j}=\div \h F_j+f_j=\div \h H_j=\div(|\mathcal F_j|^{p-2}\mathcal F_j)\]
and $\mathcal F_j\in L^{\infty}(B_1)$. In fact, 
\[\|\mathcal F_j\|_{L^{\infty}(B_{1})}\le\left(\|\h H_j\|_{L^\infty(B_1)}\right)^{\frac1{p-1}}\le\left(\|\h F_j\|_{L^\infty(B_1)}+\|\nabla h_j\|_{L^\infty(B_1)}\right)^{\frac1{p-1}}\to 0\quad \hbox{as}\ j\to \infty.\]
Now we are in position to apply Theorem 1.2 \cite{DiB-M} to infer that for any $q\ge p$ the following estimate is true 
\[\|\nabla (\zeta v_j)\|_{L^q(\R^N)}\le \gamma(\|\zeta \mathcal F_j\|_{L^q(\R^N)}+\|\zeta v_j\|_{L^{p}(\R^N)})\]
where $\zeta\in C_0^\infty(B_1)$ is a cut-off function and $\gamma$ is a tame constant. This in particular yields
\[\|\nabla v_{j_m}\|_{L^{q}(B_{\frac34})}\le \gamma, \quad \|\nabla v_0-\nabla v_{j_m}\|_{L^{q}(B_{\frac34})}\to 0\]
for any $q\ge p$ and a suitable subsequence $j_m$ with some tame constant  $\gamma$ independent of $j_m$, because the weak convergence of the gradients in 
$L^{q'}$ implies strong convergence in $L^{q}$ if $q'>q$. This finishes the proof of the claim.}
\end{proof}
\bblue{
It remains to note that 
\begin{eqnarray*}
\left|\int_{B_{\frac34}} \left(|\na v_0|^{p-2}\D v_0 - |\D v_{j_m}|^{p-2}\D v_{j_m}\right) \na \phi \right|&\le& \sup |\na \phi|\int_{B_{\frac34}} \left||\na v_0|^{p-2}\D v_0 - |\D v_{j_m}|^{p-2}\D v_{j_m}\right| \\\nonumber
&\le& \gamma(p) \sup |\na \phi|\int_{B_{\frac34}} \left|\D v_0 - \D v_{j_m}\right|^{p-1}
\end{eqnarray*}
provided that $1<p\le 2$, where the last estimate follows from  \cite{Lindqvist} page 43. 
Note that  
\[\int_{B_{\frac34}} \left|\D v_0 - \D v_{j_m}\right|^{p-1}\le 
\left(\int_{B_{\frac34}} \left|\D v_0 - \D v_{j_m}\right|^p \right)^{\frac{p-1}p} |B_{\frac34}|^{\frac1p}\to 0.\]
As for the case $p>2$ again from \cite{Lindqvist} page 43 by choosing $q> 2(p-2)$ we have 
\begin{eqnarray*}
\int_{B_{\frac34}} \left||\na v_0|^{p-2}\D v_0 - |\D v_{j_m}|^{p-2}\D v_{j_m}\right| &\le& 
(p-1)\int_{B_{\frac34}}|\nabla v_0-\na v_{j_m}|(|\na v_0|^{p-2}+|\na v_{j_m}|^{p-2})\\\nonumber
&\lesssim&\left(\int_{B_{\frac34}}|\nabla v_0-\na v_{j_m}|^2\int_{B_{\frac34}}|\na v_0|^{2(p-2)}+|\na v_{j_m}|^{2(p-2)}\right)^{\frac12}\\\nonumber
&\to&0\ \ \hbox{as}\ j_m\to \infty.
\end{eqnarray*}
}
Thus $v_0\in W^{1,p}(B_{\frac34})$ is a nonnegative continuous  solution of $\div(|\D v_0|^{p-2}\D v_0)=0$ 
in $B_{\frac 34}$. On the other hand, it follows  from
uniform convergence $v_{j_m}\rightarrow v_0$ that  (\ref{eq-zero}) translates to $v_0$ and we have $v_0(0)=0$ and 
also $\sup\limits_{B_{\frac 12}} v_0=\frac 12$ thanks to \eqref{est-1}.
However this is in contradiction with  the strong maximum principle  and the proof 
\rred{of linear growth from the free boundary} follows.

\medskip 

{$\bf 2^\circ$}
Let us take $X_0\in \C_L$ such that $u(X_0)>0$ and set $r=\dist(X_0, \partial\{u>0\})$.
Then by linear growth $u(X_0)\leq C r$ with some tame constant $C>0$.
Consider the scaled function $v(X)=\frac{u(X_0+rX)}r, X\in B_1$. Then $v\ge 0$ solves the equation 
$$\div(|\D v(X)|^{p-2}\nabla  v(X))=\div(\h F(X_0+rX))+rf(X_0+rX)\quad \text{ in}\  B_1$$ 
and $v(0)\leq C$.
Here $\h F(X)=au(X) e_N\in C^\alpha(B_1)$ by Lemma \ref{lemma:coherence}.
From the weak Harnack inequality, \cite{Serrin} Theorem 7,  we have that 
\begin{eqnarray*}
\sup_{B_{\frac12}}v &\leq& c_0(v(0)+\|\h F\|_{L^\infty(B_1)}^{\frac1{p-1}}+(r\|f\|_{L^\infty(B_1)})^{\frac1{p-1}})\\
&\leq& c_0(C+\|\h F\|_{L^\infty(B_1)}^{\frac1{p-1}}+(r\|f\|_{L^\infty(B_1)})^{\frac1{p-1}}).
\end{eqnarray*}

From the local gradient estimates \cite{Lieberman}  we infer that 
$$\sup\limits_{B_{1/4}}|\nabla  v|\leq 
c_0(C+\|\h F\|_{L^\infty(B_1)}^{\frac1{p-1}}+(\|\h F\|_{C^\alpha(B_r)}^*)^{\frac1{p-1}}+(r\|f\|_{L^\infty(B_1)})^{\frac1{p-1}})$$ 
where the star above means the classical weighted H\"older norm using the radius of the ball. In particular 
$|\nabla v(0)|\leq \widetilde C$ for some tame constant $\widetilde C$ and rescaling back we infer $|\nabla u(X_0)|\leq \widetilde C$. This 
completes  the proof of Theorem 2. 
\qed

\section{Regularity of free boundary, Proof of Theorem \ref{FB}} \label{sec-thm3}

\subsection{Proof of Proposition \ref{cor-1}}
We recast Proposition \ref{cor-1} here.
\begin{lem}\label{lem-noneg}
Let $u\ge 0$ be a weak solution of \eqref{DP} in the sense of Definition \ref{DP-def}, $N=2<p<\infty, m^-=0$ 
and assume further that
\begin{equation}
\liminf_{z\to z_0}\frac{g(x,z)-g(x, z_0)}{z-z_0}\ge 0, \quad \forall x\in \p\Om, z_0\in [0,L], \quad \p_z g(X)=0, X\in \p \Sigma
\end{equation}
where $g\in W^{2, 2+\eta_0}(\Sigma), \eta_0>0,$ and 
$\Sigma$ is the lateral boundary of $\C_L$.
Then $u$ is monotone nondecreasing in $z$ direction.
\end{lem}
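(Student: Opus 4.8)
\textbf{Proof strategy for Lemma \ref{lem-noneg}.}

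The plan is to prove monotonicity in $z$ by a translation/comparison argument, comparing $u$ with its vertical translates $u^\tau(x,z):=u(x,z+\tau)$ for $\tau>0$. The difficulty, flagged throughout the paper, is that $\Delta_p$ combined with the discontinuous enthalpy term $\p_z\beta(u)$ admits no usable comparison principle, so a naive ``slide and compare'' will not close. Instead I would first \emph{regularize}: replace $H$ by a smooth increasing approximation $H_\e$ (so $\beta_\e(s)=as+H_\e(s)$ is $C^1$ and strictly increasing) and $\Delta_p$ by the nondegenerate operator $\div((\e^2+|\nabla u|^2)^{(p-2)/2}\nabla u)$, obtaining solutions $u_\e$ of the corresponding Dirichlet problem with the same boundary data $g$ (extended by $g(x,0)=-m^-=0$, $g(x,L)=m^+$). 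For the regularized equation the linearization is a genuine uniformly elliptic operator in divergence form and the standard weak maximum principle applies, so one can run the translation argument cleanly there and then pass $\e\to0$, using the a priori estimates of Theorems \ref{thm-bmo}–\ref{Lip} (in particular local Lipschitz bounds, stable under the regularization) to extract a limit equal to $u$.

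For the regularized problem the heart of the matter is the following. Differentiate the equation for $u_\e$ in $z$: $w:=\p_z u_\e$ solves a linear uniformly elliptic equation $\div(A_\e(X)\nabla w)=\p_z(\beta_\e'(u_\e)\,w)$, i.e. $w$ is a weak solution of an equation of the form $\div(A_\e\nabla w - \b_\e w)=0$ with bounded measurable coefficients, where I have used $\p_z\beta_\e(u_\e)=\beta_\e'(u_\e)\p_z u_\e$. It therefore suffices to show $w\ge0$, and this follows from the maximum principle for such operators once we control the sign of $w$ on $\p\C_L$. On the lateral boundary $\Sigma$, $w=\p_z g\ge0$ in the $\liminf$ sense furnished by hypothesis \eqref{g-cond}; the extra condition $\p_z g=0$ on $\p\Sigma$ (the top and bottom edges of $\Sigma$) guarantees compatibility at the corners $\Omega\times\{0\}$ and $\Omega\times\{L\}$ so that no spurious negative boundary contribution arises there, and the regularity $g\in W^{2,2+\eta_0}(\Sigma)$ is exactly what is needed to make $\p_z g$ a well-defined trace and to have enough regularity of $u_\e$ up to $\Sigma$. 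On the top and bottom faces $\Omega\times\{0,L\}$ one cannot prescribe $w$ directly, so instead I would argue via the translated function: for $\tau>0$ small, $u_\e^\tau$ and $u_\e$ both solve the regularized equation, $u_\e^\tau\ge u_\e$ on $\p\C_{L-\tau}$ (on the bottom because $u_\e\ge0=u_\e(\cdot,0)$, on the lateral part by the monotonicity of $g$ in $z$, on the top because $u_\e^\tau$ is evaluated at height $L$ where $u_\e=m^+$ is the maximum — here one uses $u_\e\le m^+$, itself a maximum-principle consequence), hence $u_\e^\tau\ge u_\e$ in $\C_{L-\tau}$ by the comparison principle for the nondegenerate operator with strictly monotone $\beta_\e$; dividing by $\tau$ and letting $\tau\to0$ gives $\p_z u_\e\ge0$.

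The main obstacle is precisely the passage from the regularized solutions back to $u$: one must know that $u_\e\to u$ (along a subsequence) strongly enough in $W^{1,p}_{loc}$ that the inequality $\p_z u_\e\ge0$ survives in the limit as $\p_z u\ge0$ a.e., and that the limit is genuinely the weak solution of \eqref{DP} in the sense of Definition \ref{DP-def} — in particular that $\eta_\e=\beta_\e(u_\e)$ converges to some $\eta\in\beta(u)$, which requires identifying the weak-$*$ limit of $H_\e(u_\e)$ with an element of the maximal monotone graph $H(u)$ on $\{u=0\}$. This is standard monotone-operator / Minty-type machinery given the uniform bounds, but it is where the real work lies; the translation argument itself is then routine. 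A secondary technical point is justifying the $z$-differentiation of $u_\e$ and the up-to-the-boundary regularity on $\Sigma$, for which the $C^{1,\alpha}$ regularity of $\p\Omega$ and $g\in W^{2,2+\eta_0}$ are used, together with the corner compatibility $\p_z g=0$ on $\p\Sigma$.
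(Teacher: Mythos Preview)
Your approach is genuinely different from the paper's, and it is worth comparing the two.

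The paper regularizes only the operator (replacing $\Delta_p$ by $\div((\e^2+|\nabla\cdot|^2)^{(p-2)/2}\nabla\cdot)$) but \emph{keeps the discontinuous $\beta$}. It then restricts attention to the superlevel set $\{u^\e>\delta\}$, where $\beta(u^\e)=au^\e+\ell$ is affine; on this set the $z$-derivative $w=\partial_z u^\e$ satisfies the linear equation $\div(A(X)\nabla w)=a\,\partial_z w$ with \emph{no zeroth-order term}, so the minimum principle applies cleanly. The price is that one must make sense of the boundary $\partial\{u^\e>\delta\}$: the paper proves $u^\e\in W^{2,2+\eta}$ up to the fixed boundary (via a reflection across $\Omega\times\{L\}$ and Gehring's lemma), invokes Sard's theorem for $W^{2,2+\eta}$ functions to get that $\partial\{u^\e>\delta\}$ is a regular curve for a.e.\ $\delta$, and checks the sign of $w$ on that curve and on the top face via Hopf's lemma. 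This is where the hypotheses $N=2$, $g\in W^{2,2+\eta_0}$, and $\partial_z g=0$ on $\partial\Sigma$ enter.

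Your route---regularize $\beta$ as well and run a translation/comparison argument on the whole cylinder---is more elementary and sidesteps Sard entirely. However, its viability rests on one point you treat as given: the comparison principle for the doubly regularized equation $\div((\e^2+|\nabla u|^2)^{(p-2)/2}\nabla u)=\partial_z\beta_\e(u)$. This is \emph{not} a black-box maximum principle: if you linearize, the difference $w=u_1-u_2$ solves $\div(\tilde A\nabla w)-\partial_z(\tilde c\,w)=0$, and the effective zeroth-order coefficient $-\partial_z\tilde c$ has no sign, so Gilbarg--Trudinger~Theorem~8.1 does not apply directly. The comparison \emph{can} be obtained via Kamin's duality argument (solve the adjoint $\div(\tilde A\nabla\phi)+\tilde c\,\partial_z\phi=\psi$, which has only a drift and hence a clean maximum principle, then integrate by parts), and this is exactly what the paper alludes to in Section~\ref{gens} when it says comparison ``can be proved by a standard method discussed in [CY]'' for uniformly elliptic operators. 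You should make this step explicit rather than asserting it. Note also that your first approach (differentiating the doubly regularized equation) runs into precisely this issue: the equation for $w=\partial_z u_\e$ contains $\beta_\e'(u_\e)$, and after expansion a term $\beta_\e''(u_\e)(\partial_z u_\e)w$ appears with no sign control---the paper avoids this by not smoothing $\beta$, so that on $\{u^\e>0\}$ only the linear part $a$ survives.

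In short: your translation argument is a valid and arguably cleaner alternative, provided you supply the Kamin-type comparison for the regularized problem; the paper's Sard-based argument trades that for more delicate regularity and level-set analysis but never needs comparison for an equation with a convection term involving $\beta$.
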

\begin{proof}
For $\e>0$ small let us consider the mollified problem

\begin{equation}\label{mol-pde}
\div\left((\e^2+|\na u^\e(X)|^2)^{\frac{p-2}2}\na u^\e(X)\right)=\partial_z(\beta(u^\e(X))), \qquad X\in \C_L,
\end{equation}
with boundary condition $u^\e(X)=g(X)$ on $\partial \C_L$, where 
$g\in W^{2, 2+\eta_0}(\Sigma), \eta_0>0$
is satisfying 
\eqref{2}. The existence of $u^\e$ for each $\e>0$ follows from standard penalisation argument
for uniformly elliptic equations.

\begin{clm}\label{clm-11}
Let  $\delta>0$ be small. Then  $u^\e\in W^{2, 2+\eta} (\overline{ \{u^\e>\delta\} \cap \C_L})$ for some 
$\eta>0$ which depends on $\e$ and $\|g\|_{W^{2, 2+\eta_0}}.$ 
\end{clm}
\begin{proof}
In order to prove this claim we first extend $u^\e$ to the cylinder $(0, \frac{3L}L)\times \Om:=\widetilde\C_L$ such that the extended function $\widetilde u^\e$ solves the equation \eqref{mol-pde}. 
Introduce the upper extension of $g$ as follows 
\[
\widetilde g(x, z)=\left\{\begin{array}{ll}
g(x, z) &\hbox{if}\ x\in \p\Om, z\in(0, L),\\
g(x, 2L-z) &\hbox{if}\ x\in \p \Om, z\in(L, \frac{3L}2).
\end{array}
\right.
\]
Since $g(x, L)=m^+=const, x\in \p\Om$ and by assumption \eqref{g-cond} $\p_zg(x, L)=0, x\in \p\Om$ it follows that 
$\wt g\in W^{2, 2+\eta_0}(\wt \C_L)\cap C^{1, \alpha_0}(\wt \C_L)$ where $\alpha_0=\frac{\eta_0}{2+\eta_0}$. This can be seen  from the embedding of the Sobolev space $W^{2, 2+\eta_0}(\wt \Sigma)$ where 
$\wt \Sigma $ is the lateral boundary of $\wt \C_L$ (recall that $N=2$).
The upper extension of $u^\e$ are defined accordingly
\[\widetilde u^\e(x, z)=\left\{\begin{array}{ll}
u^\e(x, z) &\hbox{if}\ x\in \Om, z\in(0, L],\\
u^\e(x, 2L-z) &\hbox{if}\ x\in \Om, z\in(L, \frac{3L}2).
\end{array}
\right.
\]
 Let us check that $\wt u^\e$
is a solution of \eqref{mol-pde} across $\Om\times\{L\}$. 
Note that by the continuity of $u^\e$, which follows from  Theorem \ref{thm-bmo} and Remark \ref{non-var}, 
near $\Om\times\{L\}$ $v=u^\e-m^+$ solves the equation $\div\left((\e^2+|\na v|^2)^{\frac{p-2}2}\na v\right)=a\p_z v$ and $v=0$ on $\Om\times\{L\}$. By Proposition 1 \cite{Tolksdorf} it follows that 
$u^\e\in W^{2,2}_{loc}(\C_L).$ 

Take $\phi\in C_0^\infty(\wt \C_L)$ such that for some 
ball $B$ we have $\hbox{supp }\phi\subset  B\subset \wt \C_L$. Denote $D^-=B\cap \C_L$ and $D^+=B\cap( \Om\times(L, \frac{3L}2))$ and fix $t>0$ small. Then from the divergence theorem we get 
\begin{eqnarray*}
I^+_t&:=&\int_{D^-\cap \{z<L-t\}} \left((\e^2+|\na \wt u^\e|^2)^{\frac{p-2}2}\na \wt u^\e\right)\cdot \na\phi\\\nonumber
&=&
\int_{\p D^-\cap \{z<L-t\}} \phi (\e^2+|\p_z u^\e|^2)^{\frac{p-2}2}\p_z u^\e-
\int_{D^-\cap \{z<L-t\}} \phi \div\left((\e^2+|\na u^\e|^2)^{\frac{p-2}2}\na u^\e\right)\\\nonumber
&=&
\int_{\p D^-\cap \{z<L-t\}} \phi (\e^2+|\p_z u^\e|^2)^{\frac{p-2}2}\p_z u^\e-
\int_{D^-\cap \{z<L-t\}} \phi a\p_z u^\e
\end{eqnarray*}
where the last equality follows from the $W^{2,2}_{loc}(\C_L)$ estimates mentioned above. 
Similarly we have that 
\begin{eqnarray*}
I^+_t
&:=&\int_{D^+\cap \{z>L+t\}} \left((\e^2+|\na \wt u^\e|^2)^{\frac{p-2}2}\na \wt u^\e\right)\cdot \na\phi\\\nonumber
&=&-\int_{\p D^+\cap \{z>L+t\}} \phi (\e^2+|\p_z \wt u^\e|^2)^{\frac{p-2}2}\p_z \wt u^\e-
\int_{D^-\cap \{z>L+t\}} \phi a\p_z \wt u^\e.
\end{eqnarray*}
From the gradient estimates near the flat portions 
of the boundary \cite{Lieberman} it follows that $\na u^\e$ is H\"older continuous near $\Om\times\{L\}$,  therefore
\[\lim_{t\to 0}\left(\int_{\p D^-\cap \{z<L-t\}} \phi (\e^2+|\p_z u^\e|^2)^{\frac{p-2}2}\p_z u^\e-\int_{\p D^+\cap \{z>L+t\}} \phi (\e^2+|\p_z \wt u^\e|^2)^{\frac{p-2}2}\p_z \wt u^\e\right)=0.\]
Consequently 
\begin{eqnarray*}
\int \left((\e^2+|\na \wt u^\e|^2)^{\frac{p-2}2}\na \wt u^\e\right)\cdot \na\phi= \int\phi a\p_z\wt u^\e. 
\end{eqnarray*}
Near the lateral boundary of $(\Omega\times(0,\frac{3L}2))\cap\{\wt u^\e>\delta\}$ we can take 
$\phi=(\wt u^\e-g)\zeta, \zeta\in C_{0}^\infty$ such that 
$\phi\in W^{1, p}_0(\wt \C_L)$. The finite differences of $\na \wt u^\e$
in  the $z$ variable (for sufficiently small step size compared to $\delta$) satisfy uniform  $W^{2,2}$ estimates, see the proof of Lemma 8.12 in \cite{GT}  (recall that by assumption $N=2$ and hence the lateral boundary is flat).
Thus $\p_{zz}\wt u^\e, \p_{xz}\wt u^\e \in W^{2, 2} (\overline{ \{u^\e>\delta\} \cap \C_L})$
and $\p_{xx} \wt u^\e\in W^{2, 2} (\overline{ \{u^\e>\delta\} \cap \C_L})$ follows directly from the equation.

Differentiating the equation $\div\left((\e^2+|\na u^\e|^2)^{\frac{p-2}2}\na u^\e\right)=a\partial_z u^\e$
in $\{u^\e>0\}$ we get 
\begin{equation}\label{lem-z-der}
\div\left((\e^2+|\na u^\e|^2)^{\frac{p-2}2}\left[id+(p-2)\frac{\na u^\e\otimes\na u^\e}{\e^2+|\na u^\e|^2}\right]\na \partial_z u^\e\right)=a\partial_z(\partial_zu^\e).
\end{equation}
For each $\e>0$ the matrix 
$$A(x)=(\e^2+|\na u^\e|^2)^{\frac{p-2}2}\left[id+(p-2)\frac{\na u^\e\otimes\na u^\e}{\e^2+|\na u^\e|^2}\right]$$
is strictly elliptic. 
Hence $u^\e \in W^{2, 2+\eta} (\overline{ \{u^\e>\delta\} \cap \C_L})$ follows from the strict ellipticity of $A(x)$ and a standard application of Gehring's Lemma, see 
\cite{Giaq} Theorem 2.1, page 136.  This finished the proof of the claim.
\end{proof}

In view of \eqref{lem-z-der} the function 
$w=\p_z u^\e$ solves the equation $\div(A(x)\na w)=a\p_zw$ with strictly elliptic matrix $A$.
Hence from minimum principle and it follows that 
$$ 
\min\limits_{\partial(\{u^\e>0\}\cap \C_L)}\partial_z u^\e=\min\limits_{\{u^\e>0\}\cap \C_L}\partial_zu^\e
.$$

Take  an arbitrary $\delta>0$ small and let us show that $\min\limits_{\partial(\{u^\e>\delta\}\cap \C_L)}\partial_z u^\e\ge 0$.
Applying Claim \ref{clm-11} we  have that $u^\e\in W^{2, 2+\eta} (\overline{ \{u^\e>\delta\} \cap \C_L})$ for some 
$\eta>0$ which depends on $\e$ and $\|g\|_{W^{2, 2+\eta_0}}.$ 

From Sard's theorem \cite{Figalli} it follows that 
the one dimensional Lebesgue measure of the critical values of $u^\e$ is zero. 
Consequently, $\partial\{u^\e>\delta\}$ is a regular  curve for a.e. $\delta>0$ and the trace of $u^\e_z$ is well defined on it.

That said, let us consider the following cases:
\begin{itemize}
\item $X_0=(x_0, z_0)\in \partial\{u^\e>\delta\}\cap \C_L$ then for $(x, z_0)\in \fb u\cap \overline{\C_L}$ the lower bound follows immediately  
$$\liminf_{\begin{subarray}{c}z\to z_0\\z<z_0\end{subarray}}\frac{u(x_0, z)-u(X_0)}{z-z_0}=\liminf_{\begin{subarray}{c}z\to z_0\\z<z_0\end{subarray}}\frac{u(x_0, z)-\delta}{z-z_0}\ge 0.$$
\item $X_0 \in \partial\{u^\e>\delta\}\cap\p\left(\Omega \times(0,L)\right)$ then 
on the lateral boundary the tangential derivative 
agrees with that of $g$. 
\item $X_0\in \overline{\Omega}\times\{L\}$ it follows from Hopf's lemma,  because by the maximum principle 
$m^+=\max u^\e$ and $\na u^\e$ is H\"older continuous near $\Om\times\{L\}$ (see the reflection argument in the proof of Claim 11 and the application of boundary gradient estimates from \cite{Lieberman}) hence $\p_z u^\e>0$.
\end{itemize}
Consequently we conclude that $\min\limits_{\partial(\{u^\e>0\}\cap \C_L)}\partial_z u^\e\ge 0$
because $\delta>0$ was arbitrary small number.
Since the solution $u$ is unique, in view of Remark \ref{Holder}, then $u^\e\to u$ weakly in $W^{1, p}(\C_L)$ and thus
$$0\le \partial_z u(X), \qquad X\in \{u>0\}\cap \C_L.$$
\end{proof}

\subsection*{Proof of Theorem \ref{FB}} 
Let $N=2$ and \eqref{g-cond} holds,  we show that 
the free boundary is a continuous curve over $\Omega$.

 For $x\in \Omega$ introduce the following height functions 
\begin{eqnarray}
h^+(x)&=&\sup\{z\ s.t.\ u(x, z)=0\},\\\nonumber
h^-(x)&=&\inf\{z\ s.t. \ u(x, z)=0\}.\\\nonumber
\end{eqnarray}

If $h^+(x_0)>h^-(x_0)$ for some $x_0\in \Omega$ then 
it follows from $\partial_z u\ge 0$ (see Lemma \ref{lem-noneg}) that 
the free boundary contains a vertical segment of the form 
$I_0=\{x_0\}\times(a, b)$ for some $a<b$. On $I_0$ we have that 
$\partial_z u=0$. On the other hand the free boundary condition \eqref{Stefan-cond}  is satisfied in 
the classical sense on $I_0$.
Hence $|u_x|^{p-2}u_x=0$ implying 
\begin{equation}\label{nodal}
u=|\na u|=0\qquad \hbox{on}\  I_0.
\end{equation} 
However, since there is a 
touching ball from $\{u>0\}$ at the points on $\{x_0\}\times(a+\e, b-\e)$ for small $\e>0$ then it follows from 
Hopf's lemma (see the Appendix) that $|\na u|\not =0$ which is in contradiction with \eqref{nodal}.

\section{Behaviour of $\fb u$ near the fixed boundary}\label{sec-gurev}
In this section we show that at the contact points $\fb u\cap \Sigma$ 
$u$ grows at most linearly and this is contained in Lemma \ref{lem-blya}. 
We begin with a simple observation 
\begin{lem}\label{lem-subsolution}
If
$u$ is a weak solution of \eqref{mainpde} with $u_z\ge 0$ then $u$ is a weak solution of the 
following differential inequality
\begin{equation}\label{subsolution}
\Delta_pu-au_z\ge 0 \ \ \hbox{in}\ \C_L.
\end{equation}
\end{lem}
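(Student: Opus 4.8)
The plan is to use the characterization of $\beta$ as $\beta(s)=as+H(s)$ together with the sign information $u_z\ge 0$ to show that the distributional $z$-derivative of $H(u)$ is a nonnegative measure, so that moving the linear part $a u_z$ to the left and discarding the contribution of $\partial_z H(u)$ can only decrease the right-hand side. First I would start from the weak formulation \eqref{D-id} (with $f=0$, $\mathbf v=e_N$), which for every $\phi\in C_0^\infty(\C_L)$ reads
\begin{equation*}
\int_{\C_L}|\na u|^{p-2}\na u\cdot\na\phi=\int_{\C_L}\beta(u)\,\partial_z\phi=\int_{\C_L}\bigl(au+H(u)\bigr)\partial_z\phi,
\end{equation*}
where $\beta(u)=au+\eta$ with the selection $\eta=H(u)$ made precise by interpreting $H$ via the monotone graph. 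Integrating the term $\int au\,\partial_z\phi$ by parts (legitimate since $u\in W^{1,p}$) gives $-\int a u_z\phi$, so the identity becomes
\begin{equation*}
\int_{\C_L}|\na u|^{p-2}\na u\cdot\na\phi+\int_{\C_L}a u_z\,\phi=\int_{\C_L}H(u)\,\partial_z\phi .
\end{equation*}
Thus $\Delta_p u-a u_z=-\partial_z H(u)$ in the sense of distributions, and the lemma reduces to showing $\partial_z H(u)\ge 0$ as a distribution, i.e. $\int H(u)\,\partial_z\phi\le 0$ for every nonnegative $\phi\in C_0^\infty(\C_L)$.

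For this I would approximate the Heaviside function from below by a smooth nondecreasing sequence $H_k\uparrow H$ with $H_k'\ge 0$ (e.g. $H_k(s)=\min(ks^+,1)$ mollified, or any standard nondecreasing approximation vanishing for $s\le 0$). Since $u\in W^{1,p}$, the chain rule gives $H_k(u)\in W^{1,p}$ with $\partial_z\big(H_k(u)\big)=H_k'(u)\,u_z\ge 0$ a.e., using precisely the hypothesis $u_z\ge 0$. Hence $\int H_k(u)\,\partial_z\phi=-\int H_k'(u)u_z\,\phi\le 0$ for nonnegative $\phi$, and letting $k\to\infty$ with dominated convergence (the $H_k(u)$ are uniformly bounded and converge pointwise a.e. to $H(u)$) yields $\int H(u)\,\partial_z\phi\le 0$. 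Combined with the displayed identity this gives, for every nonnegative $\phi\in C_0^\infty(\C_L)$,
\begin{equation*}
\int_{\C_L}|\na u|^{p-2}\na u\cdot\na\phi+\int_{\C_L}a u_z\,\phi=\int_{\C_L}H(u)\,\partial_z\phi\le 0,
\end{equation*}
which is exactly the statement that $\Delta_p u-a u_z\ge 0$ weakly in $\C_L$.

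The only genuinely delicate point is the handling of the selection $\eta\in\beta(u)$ on the set $\{u=0\}$: a priori $\eta$ need only lie in $[0,\ell]$ there, not equal $H(u)=0$. I expect this to be the main obstacle, and I would address it by noting that $\partial_z H(u)$ is computed intrinsically from $u$ (via the monotone approximation above) independently of the choice of $\eta$, while on $\{u=0\}$ one has $u_z=0$ a.e.\ (since $\na u=0$ a.e.\ on a level set of a Sobolev function), so the measure $\partial_z\eta$ restricted to $\{u=0\}$ is likewise nonnegative for any admissible selection $\eta$ with $\eta_z\ge 0$ in the appropriate sense — indeed the monotonicity $u_z\ge0$ forces $\eta$ to be nondecreasing in $z$ along $\{u=0\}$. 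Thus the conclusion is insensitive to the choice of $\eta$ and the argument above goes through. (Alternatively, one may simply observe that whatever $\eta\in\beta(u)$ appears in Definition \ref{def-suka}, the function $z\mapsto \eta$ is nondecreasing wherever $z\mapsto u(x,z)$ is, because $\beta$ is a monotone graph, which is all that is used.)
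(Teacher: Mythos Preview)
Your proof is correct and follows essentially the same route as the paper: decompose $\beta(u)=au+\ell\chi_{\{u>0\}}$, reduce the inequality to showing $\int \chi_{\{u>0\}}\partial_z\phi\le 0$ for nonnegative test functions, and establish this by approximating the Heaviside step with smooth nondecreasing $\gamma_k$, applying the chain rule $\partial_z(\gamma_k(u))=\gamma_k'(u)u_z\ge 0$, and passing to the limit. Your extra discussion of the selection $\eta\in\beta(u)$ on $\{u=0\}$ is a careful point the paper does not raise explicitly, but it does not alter the argument.
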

\begin{proof}
Indeed, for $\zeta\ge 0, \zeta\in C_0^\infty(B)$, with some ball $B\subset \C_L$ it follows that 
\begin{eqnarray}
0&=&\int_{\C_L}|\nabla u|^{p-2}\na u\na\zeta -\int_{\C_L}\beta(u)\zeta_z \\\nonumber
&=&
\int_{\C_L}|\nabla u|^{p-2}\na u\na\zeta -\int_{\C_L}a u\zeta_z-\int_{\C_L}\ell\chi_{\{u>0\}}\zeta_z.
\end{eqnarray}
Thus it is enough to show that  $\int_{\C_L}\chi_{\{u>0\}}\zeta_z\le 0$.
Let us choose a sequence $\gamma_k(t), t\in \R$ such that 
$\gamma'_k(t)\ge 0, r\in \R$ and $\gamma_k\to \chi_{\{t>0\}})$ weak star.  
It follows that 
\begin{eqnarray}
\int \chi_{\{u>0\}}\zeta_z&=&\lim_{k\to \infty}\int \gamma_k(u)\zeta_z\\\nonumber 
&=&-\lim_{k\to \infty}\int_{B}\gamma_k'(u)u_z\zeta\\\nonumber
&=& -\lim_{k\to \infty}\int_{B}\gamma_k'(u)(u^+_z-u^-_z)\zeta\\\nonumber
&=&  -\lim_{k\to \infty}\int_{B}\gamma_k'(u)u^+_z\zeta\\\nonumber
&\le& 0
\end{eqnarray}
where we used the notation  $v^+=\max (v,0), v^-=-\min(v,0)$ and 
the last line follows from Lemma \ref{lem-noneg}.
\end{proof}

The next lemma is true in all dimensions $N\ge2$.

\begin{lem}\label{Harnack}
Let $p>2$ and $u\ge 0$ be a weak solution to $\Delta_pu=\partial_z\beta(u)$ in $\C_L$. Then there is a
constant $C_0>0$ depending only on $\ell, a, N, p$ and $\|u\|_{L^\infty(\C_L)}$ such that 
$$|\na u(X)|\le C_0\qquad \forall X\in \fb u\cap \C_L.$$
\end{lem}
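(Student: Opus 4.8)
The plan is to deduce the bound from the two parts of Theorem~\ref{Lip} — the one-phase linear growth (part~$\bf 1^\circ$, which applies here because the convection is constant, $\h v=e_N$, and $f=0$) together with the rescaling behind the local Lipschitz estimate (part~$\bf 2^\circ$) — and then to let an interior point tend to the free boundary. Throughout, on the open set $\{u>0\}$ the enthalpy reduces to $\beta(u)=au+\ell$, so $u$ solves $\Delta_p u=a\,\p_z u$ there; since $u$ is locally Lipschitz the right-hand side is bounded, and by DiBenedetto's interior $C^{1,\alpha}$ theory $\na u$ is continuous in $\{u>0\}$. Accordingly, for $X\in\fb{u}\cap\C_L$ the quantity $|\na u(X)|$ is to be read as the limit of $|\na u|$ along $\{u>0\}$ (while $\na u\equiv 0$ a.e.\ where $u=0$), and it suffices to bound $|\na u(Y)|$ for interior points $Y\in\{u>0\}$.

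First, by Theorem~\ref{Lip}~$\bf 1^\circ$ there is a tame constant $C_1$, depending only on $N,p,a,\ell$ and $\|u\|_{L^\infty(\C_L)}$, such that $u(X)\le C_1|X-Z_0|$ for $Z_0\in\fb{u}\cap\C_L$ and $X$ near $Z_0$. Now fix $Y\in\{u>0\}$, put $\rho:=\dist(Y,\p\{u>0\})$ and choose $Z_0\in\p\{u>0\}$ realizing it; then $B_\rho(Y)\subset\{u>0\}$ and $|X-Z_0|\le 2\rho$ for $X\in B_\rho(Y)$, so linear growth gives $\sup_{B_\rho(Y)}u\le 2C_1\rho$. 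Consider the rescaled function $w(X):=u(Y+\rho X)/\rho$ on $B_1$. A direct computation gives $\na w(X)=(\na u)(Y+\rho X)$ and shows that $w$ is a positive solution of $\Delta_p w=a\rho\,\p_z w$ in $B_1$ with $0\le w\le 2C_1$, the lower-order coefficient $a\rho$ being bounded by $a$ times the diameter of $\C_L$. By the interior gradient estimates for quasilinear equations of $p$-Laplacian type with bounded lower-order terms (\cite{Lieberman}; see also \cite{DiB-M}), one gets $\sup_{B_{1/2}}|\na w|\le C_0$ with $C_0$ depending only on $N,p,a,\ell$ and $\|u\|_{L^\infty(\C_L)}$. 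Since $\na w(0)=\na u(Y)$, this is exactly $|\na u(Y)|\le C_0$.

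As $Y\in\{u>0\}$ was arbitrary, letting $Y\to X$ for $X\in\fb{u}\cap\C_L$ and using the continuity of $\na u$ on $\{u>0\}$ gives $|\na u(X)|\le C_0$, which is the assertion. The one delicate point is the uniformity in the middle step: although $Y$ may lie arbitrarily close to $\p\C_L$ or to the free boundary, rescaling by the intrinsic radius $\rho=\dist(Y,\p\{u>0\})$ always reduces matters to the same model problem on $B_1$ — a positive solution of $\Delta_p w=a\rho\,\p_z w$ with $0\le w\le 2C_1$ — for which the gradient estimate is genuinely independent of scale and of the location of $Y$; it is precisely here that the one-phase linear growth of Theorem~\ref{Lip}~$\bf 1^\circ$ is used. (Alternatively, the same conclusion can be read off from the Campanato estimate of Lemma~\ref{lemma:coherence}: since $(\na u)_{X_0,r}=|B_r|^{-1}\int_{\p B_r(X_0)}u\,\nu\,d\sigma$ and $u\ge 0$, linear growth bounds $|(\na u)_{X_0,r}|$ uniformly in $r$, whence $\fint_{B_r(X_0)}|\na u|^2$ is bounded uniformly in $r$, and $|\na u(X_0)|\le C_0$ follows again from the continuity of $\na u$ on $\{u>0\}$.)
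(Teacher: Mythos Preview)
Your argument has a genuine gap concerning the uniformity of the constant. You invoke Theorem~\ref{Lip}~$\bf 1^\circ$ and claim that the linear-growth constant $C_1$ depends only on $N,p,a,\ell$ and $\|u\|_{L^\infty(\C_L)}$. But Theorem~\ref{Lip} is stated for a fixed subdomain $D\Subset\C_L$, and the constant there explicitly depends on $\dist(\overline D,\p\C_L)$. Consequently your bound $|\na u(Y)|\le C_0$ carries a hidden dependence on $\dist(X,\p\C_L)$ for $X\in\fb{u}\cap\C_L$, and you have not proved the lemma as stated. This is not a cosmetic issue: the paper remarks immediately after the lemma that its whole point is precisely that $C_0$ does \emph{not} depend on the distance of $X_0$ from $\Sigma$, which is what makes the lemma stronger than Theorem~\ref{Lip}. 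Your parenthetical alternative suffers from the same defect, since it still invokes ``linear growth'' coming from Theorem~\ref{Lip} to bound $|(\na u)_{X_0,r}|$.

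The paper's route avoids this by not relying on Theorem~\ref{Lip} at all. It starts from the BMO estimate of Lemma~\ref{lemma:coherence}, whose constants depend only on $a,\ell,p,N,\|u\|_\infty$ (the distance to $\p\C_L$ constrains only the admissible radii, not the constants). From
\[
\int_{B_r(X_0)}\Big|\na u-\fint_{B_r(X_0)}\na u\Big|^2\le Cr^N
\]
the paper derives linear growth directly at a free boundary point $X_0$: writing $\frac1{\rho^N}\int_{B_\rho(X_0)}u$ as an integral in $t$ of $\frac1{t^N}\int_{B_t(X_0)}(\na u-(\na u)_{X_0,t})\cdot\frac{X-X_0}{t}$, the BMO bound gives $\frac1{\rho^N}\int_{B_\rho(X_0)}u\le C\rho$ with $C$ independent of the location of $X_0$. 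A weak Harnack inequality then upgrades this to a pointwise supremum bound with the same independence. In other words, the paper \emph{derives} the uniform linear growth from BMO rather than importing it from Theorem~\ref{Lip}; that inversion of the logic is exactly the missing idea in your argument.
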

\begin{proof}
Let $p>2$ then if $X\in \fb u\cap \C_L$ then $d(X)=\dist(X, \p\C_L)>0$. For 
$r<\frac{d(X)}2$ 
\begin{equation}\label{30}
\int_{B_r(X)}\left|\nabla u-\fint_{B_r(X)}\nabla u\right|^2\leq Cr^N
\end{equation}
where $C$ depends on $a, \ell, p, \|u\|_\infty$ and $N$, see Lemma \ref{lemma:coherence}.

Let $\e>0$ be small, for $\rho>\e$ and $X_0\in\fb u, B_{2\rho}(X_0)\subset \C_L$ we have 
\begin{equation}
\frac1{\rho^N}\int_{B_\rho(X_0)}u= \frac1{\e^N}\int_{B_\e(X_0)}u+\int_{\e}^\rho\frac{d}{dt}\left(\frac1{t^N}\int_{B_t(X_0)}u\right)dt.
\end{equation}
From the log-Lipschitz continuity of $u$ it follows that the Lebesgue Differentiation Theorem holds everywhere. Thus, it follows that
$$\lim_{\e \to 0}\frac1{\e^N}\int_{B_\e(X_0)}u=0$$
and consequently  
\begin{eqnarray}\label{lin-grth}
\frac1{\rho^N}\int_{B_\rho(X_0)}u&=& \int_{0}^\rho\frac{d}{dt}\left(\frac1{t^N}\int_{B_t(X_0)}u\right)dt\\\nonumber
&=& \int_{0}^\rho\frac{d}{dt}\left(\int_{B_1}u(X_0+tX)dX\right)dt\\\nonumber
&=&\int_{0}^\rho\left(\frac1{t^N}\int_{B_t}\na u(X_0+tX)\frac{X}{t}dX\right)dt\\\nonumber
&=&\int_{0}^\rho\left(\frac1{t^N}\int_{B_t}\na u(X_0+Y)\frac{Y}{t}dY\right)dt\\\nonumber
&=& \int_{0}^\rho\left(\frac1{t^N}\int_{B_t(X_0)}\left[\na u(X)-\fint_{B_t(X_0)}\na u\right]\frac{X-X_0}{t}dX\right)dt \\\nonumber
&\le& \int_{0}^\rho\left(\frac1{t^N}\int_{B_t(X_0)}\left|\na u(X)-\fint_{B_t(X_0)}\na u\right|dX\right)dt \\\nonumber
&\le& C\rho
\end{eqnarray}
where the last line follows from \eqref{30}.
It remains to apply Harnack's inequality in order to finish the proof. 
Let $v(X)=\frac{u(X_0+\rho X)}{\rho}, X\in B_1$ then 
$\Delta_p v(X)=\partial_z(\beta(u(X_0+\rho X)))$. By Lemma \ref{lem-subsolution} 
$$\div(|\na v|^{p-2}\na v)\ge a\rho v_z$$
Denote $\cal A(v, \xi)=|\xi|^{p-2}\xi
, \cal B(\xi)=a\rho\xi_N$.
Thus $v$ solves an inequality of the following form $\div \cal A(v, \na v)\ge \cal B(\na v)$ and
\begin{eqnarray*}
|\cal A(v, \xi)|&\le& |\xi|^{p-1}
\\
|\cal B(\xi)|&\le& \frac{|\xi|^p}{p}+\frac{(a\rho)^{p'}}{p'}\\
\cal A(v, \xi)\cdot \xi&\ge& \frac{p-1}p|\xi|^p
\end{eqnarray*}
where $1/p+1/{p'}=1$.
Thus $\cal A$ and $\cal B$ satisfy the 
structural conditions (3.5) in \cite{MZ}. 
From the weak Harnack inequality, Corollary 3.10 \cite{MZ} we infer that 
$$\sup_{B_{\frac12}}v \le C\left[\fint_{B_1}v+\kappa \right]$$
where $C$ depends on $p, N$ and $\kappa$ depends on $p,\ell, N$ and $a$.
Now the desired estimate follows from \eqref{lin-grth}. 
\end{proof}

\smallskip 

Observe that Lemma \ref{Harnack} is stronger than Theorem \ref{Lip}
since the  constant $C_0$  does not depend on the distance of the point $X_0$ from $\Sigma$.

\begin{lem}\label{lem-blya}
Let $u\ge 0$ be as in  Theorem \ref{FB}, $N=2<p$.
 There is a constant $\mu>0$ such that 
\begin{equation}
u(X)\le\mu|X-X_0|
\end{equation}
for any 
$X_0\in \fb u \cap \Sigma.$
\end{lem}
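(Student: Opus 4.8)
The plan is to establish the linear growth of $u$ at a contact point $X_0\in\fb u\cap\Sigma$ by a blow‑up/compactness argument, reducing it to the interior estimate already obtained in Lemma \ref{Harnack} together with the $C^{1,\alpha}$ regularity of the fixed boundary data. First I would reduce to the model situation: since $\p\Omega\in C^{1,\alpha}$, after a $C^{1,\alpha}$ change of coordinates flattening $\Sigma$ near $X_0$ (and noting that in two dimensions the lateral boundary $\p\Omega\times(0,L)$ is a curve), we may assume $X_0=0$, that $\Sigma$ is locally the hyperplane $\{x_1=0\}$, and that $\C_L$ lies in $\{x_1>0\}$; the equation \eqref{mainpde} is transformed into a quasilinear equation with the same structural constants (up to controlled perturbations from the $C^{1,\alpha}$ diffeomorphism), and the boundary datum $g$ satisfies $g(0)=u(X_0)=0$ with $g\in W^{2,2+\eta_0}(\Sigma)\hookrightarrow C^{1,\alpha_0}$, so $|g(X)-g(X_0)|\le C|X-X_0|$ near $X_0$. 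This last bound is the crucial input that makes the contact‑point case behave like a (mixed Dirichlet/free‑boundary) one‑phase problem.

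Next I would run the contradiction argument exactly as in the proof of Theorem \ref{Lip}, part $\mathbf 1^\circ$, adapted to the boundary. Suppose no such $\mu$ exists; then there are contact points $X_j\in\fb{u_j}\cap\Sigma$ and radii $2^{-k_j}\to 0$ with
\[
\sup_{B_{2^{-k_j-1}}(X_j)\cap\C_L}u_j>\max\Bigl(j\,2^{-k_j},\ \tfrac12\sup_{B_{2^{-k_j}}(X_j)\cap\C_L}u_j\Bigr),
\]
and one also uses $\sup_{B_{2^{-k_j}}(X_j)\cap\Sigma}|g_j-g_j(X_j)|\le C2^{-k_j}$, which by the displayed bound on $g$ does not destroy the normalization. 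Setting $v_j(X)=u_j(X_j+2^{-k_j}X)/S_j$ with $S_j=\sup_{B_{2^{-k_j-1}}(X_j)\cap\C_L}u_j$, one gets $v_j\ge 0$, $v_j(0)=0$, $\sup_{B_{1/2}\cap H_j}v_j\ge\tfrac12$, $0\le v_j\le 2$ on $B_1\cap H_j$ (where $H_j$ is the rescaled half‑cylinder), and $v_j$ has boundary trace $(g_j(X_j+2^{-k_j}X)-g_j(X_j))/S_j$ on the flattened $\Sigma$, which tends to $0$ in $C^{0,\alpha_0}$ because $2^{-k_j}/S_j\le 1/j\to 0$. As in Theorem \ref{Lip}, the rescaled right‑hand sides $\h F_j$ and $f_j$ tend to $0$ uniformly; Caccioppoli (Appendix) plus Serrin's interior estimate \cite{Serrin} and the boundary Hölder/gradient estimates of Lieberman \cite{Lieberman} (valid since the flattened boundary is $C^{1,\alpha}$ and the datum converges in $C^{0,\alpha_0}$) give uniform $C^{0,\alpha}$ bounds up to the flat boundary. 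Passing to a subsequence, $v_{j_m}\to v_0$ locally uniformly on $\overline{B_{3/4}\cap H}$ and weakly in $W^{1,p}$, where $H=\{x_1>0\}$, and — arguing with the Claim inside the proof of Theorem \ref{Lip} (the $W^{1,q}$ estimates from \cite{DiB-M}, which localize up to the boundary) — $v_0$ is a nonnegative $p$‑harmonic function in $B_{3/4}\cap H$ with $v_0=0$ on $B_{3/4}\cap\p H$ and $v_0(0)=0$, $\sup_{B_{1/2}\cap H}v_0=\tfrac12$.

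Finally I would derive the contradiction from the boundary behaviour of nonnegative $p$‑harmonic functions vanishing on a hyperplane: extending $v_0$ by odd reflection across $\{x_1=0\}$ gives a $p$‑harmonic function in $B_{3/4}$ (odd reflection preserves $p$‑harmonicity across a hyperplane), nonnegative on one side and vanishing on $\{x_1=0\}$; by the strong maximum principle / Harnack inequality for the $p$‑Laplacian, a nonnegative $p$‑harmonic function with an interior zero vanishes identically, contradicting $\sup_{B_{1/2}\cap H}v_0=\tfrac12$. (Alternatively, one may invoke the boundary Harnack / Hopf principle for $\Delta_p$ directly.) This contradiction proves the dyadic estimate
\[
\sup_{B_{2^{-k-1}}(X)\cap\C_L}u\le\max\bigl(C2^{-k},\ \sup_{B_{2^{-k}}(X)\cap\C_L}u\bigr),\qquad X\in\fb u\cap\Sigma,
\]
from which, summing the geometric series as in \cite{K-2}, one obtains $u(X)\le\mu|X-X_0|$ for $X$ near $X_0$; a covering argument over the compact contact set extends this to the claimed global bound.

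I expect the main obstacle to be controlling the passage to the limit up to the curved boundary $\Sigma$ under the $C^{1,\alpha}$ flattening diffeomorphism: one must ensure that the transformed operator still satisfies Serrin/Lieberman‑type structural hypotheses with constants uniform in $j$, and that the boundary gradient estimates of \cite{Lieberman} apply with the (converging, but only $C^{0,\alpha_0}$) rescaled Dirichlet data — so that $v_0$ genuinely attains the zero datum on $\p H$ and the odd‑reflection argument is legitimate. The interior parts of the argument are essentially a rerun of the proofs of Theorem \ref{Lip} and Lemma \ref{Harnack}.
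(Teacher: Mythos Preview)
Your blow-up/compactness argument has a genuine gap at the contradiction step. After rescaling around a contact point $X_j\in\Sigma$, the origin lies on the flat boundary $\partial H=\{x_1=0\}$, not in the interior of $H$. Thus the limit $v_0$ is a nonnegative $p$-harmonic function on the half-ball $B_{3/4}\cap H$ with $v_0=0$ on $B_{3/4}\cap\partial H$, and the condition $v_0(0)=0$ is nothing more than the boundary condition itself. This configuration is \emph{not} contradictory: the linear function $v_0(x_1,z)=c\,x_1$ is a perfectly good example. Your odd-reflection step does not rescue this, because the reflected function $\tilde v_0$ is $\le 0$ on $\{x_1<0\}$ and hence not nonnegative in $B_{3/4}$; the strong maximum principle / Harnack inequality for nonnegative $p$-harmonic functions simply does not apply to it. The boundary Hopf lemma likewise gives only $\partial_{x_1}v_0(0)>0$, which is again consistent with $v_0=c\,x_1$ and yields no contradiction. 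In short, the mechanism that produces a contradiction in the interior case of Theorem \ref{Lip} --- an \emph{interior} zero of a nonnegative $p$-harmonic function --- is absent when the blow-up centre sits on the fixed boundary.

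The paper's argument is different and much shorter. Since $N=2$, the lateral boundary $\Sigma$ is already flat (no flattening is needed). One introduces the barrier $w$ solving the smooth equation $\Delta_p w-a\,w_z=0$ in the half-ball $B_r(X_0)\cap\C_L$ with $w=u$ on its boundary. Because $g\in W^{2,2+\eta_0}(\Sigma)$, Lieberman's boundary gradient estimates give $|\nabla w|\le C(r)$ up to $\Sigma$, hence $w(X)\le C|X-X_0|$. The key input is Lemma \ref{lem-subsolution}: since $u_z\ge 0$, $u$ is a weak \emph{subsolution} of $\Delta_p(\cdot)-a\,\partial_z(\cdot)=0$, so by the comparison principle $u\le w$, and the linear bound follows. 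Your compactness machinery is unnecessary here; the barrier/comparison route bypasses the difficulty entirely.
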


\begin{proof}
Let $X_0\in \partial \Omega\times(0,L)$ and take $r$ small, say $r=\hbox{diam}{\Omega}/100$ such that 
$B_{r}(X_0)\cap \C_L$ is a half ball entirely in $\C_L$. Recall that  $N=2$ and therefore the lateral boundary of 
$\C_L$ is flat. 
Let $w$ be the solution to the following Dirichlet problem
\[\left\{
\begin{array}{lll}
\Delta_pw-aw_z=0 & \hbox{in}\ \ \ C_L\cap B_r(X_0),\\
w=u & \hbox{on}\ \ \partial (\C_L\cap B_r(X_0)).
\end{array}
\right.
\]
Since $g\in {W^{2,2+\eta_0}}(\Sigma)$ then we can apply the interior gradient estimates  from \cite{Lieberman} in the half ball 
$B^+_r=\C_L\cap B_r(X_0)$ to infer the following estimate
\begin{equation}\label{grad-est-hav}
\sup_{B_{r/2}}|\na w|\le C\frac{\sup_{B_{r}^+}w}{r}=C\frac{\sup_{\p B_{r}^+}w}{r}\le C(r)
\end{equation}
where $C(r)$ also depends on $\|g\|_{W^{2,2+\eta_0}}$.
By \eqref{subsolution} $u$ is a subsolution and hence we can apply the  comparison principle  Theorem 3.5.1
\cite{Serrin-book} and the
boundary gradient estimate \eqref{grad-est-hav} in order to obtain 
$u\le w\le C|X-X_0|$  in $B_{\frac r2}(X_0)$ with tame constant $C>0$ depending only on 
$a, \ell, \|g\|_{W^{2,2+\eta_0}}, r, p$.
\end{proof}

\section{Concluding remarks}\label{gens}
When the governing quasilinear equation is uniformly elliptic then 
the arguments can be considerably simplified.
As an example let us consider the following operator \cite{ACF-quasi}: 
Let $F(t)$ be a function in $C^{2, 1}[0, \infty)$ satisfying 
\begin{eqnarray}
F(0)=0, \quad c_0\le F'(t)\le C_0,\\\nonumber
0\le F''(t)\frac{C}{1+t},
\end{eqnarray}
for some positive constants $c_0, C_0$.
From here we find that 
$f(\xi)=F(|\xi|^2)$ is convex and the following holds 
\begin{eqnarray}
\gamma|\xi|^2\le f(\xi)\le \frac1\gamma|\xi|^2,\\\nonumber
f_\xi(\xi)\cdot \xi\ge\gamma|\xi|^2,\\\nonumber
\gamma|\eta|^2\le\sum_{ij}\frac{\partial^2 f(\xi)}{\partial \xi_i\partial \xi_j}\eta_i\eta_j\le \frac1\gamma|\eta|^2,
\end{eqnarray}
where $f_\xi=\nabla_\xi f$ and $\gamma>0$.
The quasilinear operator 
$\mathcal L v=\div(f_\xi(\na v))$ is now uniformly elliptic and Theorems \ref{thm-bmo}-\ref{Lip}
can be extended to the solutions of $\cal Lu=\partial_z(\beta(u))$ with less efforts. 
Moreover, by differentiating $\cal Lu=au_z$ in $z$-direction we can see that 
$u_z$ solves a strictly elliptic operator and hence applying the comparison principle (which can be proved by a standard method discussed in \cite{CY})
one can infer that $u_z\ge 0$ provided that \eqref{g-cond} and $u\ge 0$ hold.

Notwithstanding its simple form the equation $\Delta_p u=\partial_z(\beta(u))$ differs drastically 
from its strictly elliptic counterpart. This in particular includes: 
\begin{itemize}
\item The strong comparison principle is not known for the $p-$Laplace structure as opposed to 
the case $p=2$, see \cite{CY} Lemma 2.1. The reason is that one cannot define the corresponding strictly elliptic
adjoint problem and hence Kamin's argument cannot be generalised directly.
\item When $p=2$ then one  can deduce that the solution $u\ge 0$ is non-degenerate at the free boundary 
points by using Biacchi's transformation. This allows to transform the continuous casting problem to 
an obstacle like problem and apply the techniques developed for the latter for a class of
divergence form elliptic equations \cite{K-DCDS}. This argument fails to 
work when $p\not=2$ due to the nonlinear structure of the operator $\Delta_p u$.

\end{itemize}

It would be intersting to find out whether any of these difficulties can be circumvented 
which will lead to stronger free boundary regularity.

\section*{Appendix}
\subsection{The Caccioppoli inequality}
\begin{lem}
Let $u$ be a weak solution of the equation $\Delta_p u=\partial_z(\beta(u))+f$ in $B_1$, $f\in C(\overline{\C_L})$.
Then there is a constant $\Gamma$ depending only on 
$\sup_{B_1}|u|, \sup_{\C_L}|f|, a, \ell, p$ and $N$ such that 
\begin{equation}
\int_{B_{\frac34}}|\na u|^p\le \gamma. 
\end{equation}

\end{lem}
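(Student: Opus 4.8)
The plan is to establish the Caccioppoli inequality by the standard device of testing the weak formulation against $\phi=\zeta^p u$, where $\zeta\in C_0^\infty(B_1)$ is a cut-off with $\zeta\equiv 1$ on $B_{3/4}$ and $|\nabla\zeta|\le C$. First I would plug this $\phi$ into the identity \eqref{D-id}, which gives
\[
\int_{B_1}|\nabla u|^{p-2}\nabla u\cdot\nabla(\zeta^p u)=\int_{B_1}\beta(u)\,\partial_z(\zeta^p u)-\int_{B_1}f\zeta^p u.
\]
Expanding the left-hand side produces the good term $\int_{B_1}\zeta^p|\nabla u|^p$ plus an error $p\int_{B_1}\zeta^{p-1}u\,|\nabla u|^{p-2}\nabla u\cdot\nabla\zeta$, which by Young's inequality with exponents $p$ and $p'$ is bounded by $\tfrac14\int_{B_1}\zeta^p|\nabla u|^p+C(p)\int_{B_1}|u|^p|\nabla\zeta|^p$. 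The first piece is absorbed into the good term, and the second is controlled by $\sup_{B_1}|u|$ and the constant $C$ coming from $\zeta$.

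Next I would handle the two terms on the right. Writing $\partial_z(\zeta^p u)=\zeta^p\partial_z u+pu\zeta^{p-1}\partial_z\zeta$ and using $|\beta(u)|\le a\sup_{B_1}|u|+\ell$, the first contribution $\int_{B_1}\beta(u)\zeta^p\partial_z u$ is estimated by $\int_{B_1}|\beta(u)|\zeta^p|\nabla u|\le\tfrac14\int_{B_1}\zeta^p|\nabla u|^p+C(p)\int_{B_1}|\beta(u)|^{p'}\zeta^p$ via Young's inequality again, absorbing the gradient part; the remaining term is bounded by $(a\sup|u|+\ell)^{p'}|B_1|$. The second contribution $\int_{B_1}\beta(u)\,pu\zeta^{p-1}\partial_z\zeta$ and the forcing term $\int_{B_1}f\zeta^p u$ are both zeroth-order and directly bounded by $\sup_{B_1}|u|$, $\sup_{\C_L}|f|$, $a$, $\ell$, and the cut-off constant. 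Collecting all absorptions, the coefficient of $\int_{B_1}\zeta^p|\nabla u|^p$ on the left is at least $1-\tfrac34>0$, so moving it over yields $\int_{B_{3/4}}|\nabla u|^p\le\int_{B_1}\zeta^p|\nabla u|^p\le\Gamma$ with $\Gamma=\Gamma(\sup_{B_1}|u|,\sup_{\C_L}|f|,a,\ell,p,N)$.

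The only point requiring a little care is the legitimacy of using $\phi=\zeta^p u$ as a test function: strictly speaking \eqref{D-id} is stated for $\phi\in C_0^\infty$, whereas $\zeta^p u$ is merely in $W^{1,p}_0(B_1)$. This is resolved by a routine density argument, approximating $\zeta^p u$ in $W^{1,p}_0(B_1)$ by smooth compactly supported functions and passing to the limit, which is justified since $|\nabla u|^{p-2}\nabla u\in L^{p'}(B_1)$ and $\beta(u)\in L^\infty(B_1)$. I do not expect any genuine obstacle here — this is a textbook Caccioppoli estimate adapted to the convection term $\partial_z\beta(u)$ — the mild bookkeeping of constants through the several applications of Young's inequality is the most tedious part, and one simply tracks that every constant depends only on the listed quantities.
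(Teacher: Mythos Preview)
Your proposal is correct and follows essentially the same approach as the paper: test the weak formulation against $\zeta^p u$ with a standard cut-off, expand to isolate the good term $\int\zeta^p|\nabla u|^p$, and absorb the cross terms and the $\beta(u)\partial_z u$ contribution via Young's inequality using $|\beta(u)|\le a\sup|u|+\ell$. The only (cosmetic) differences are that the paper writes the Young parameter as a generic $\varepsilon$ rather than your explicit $\tfrac14$, and that you spell out the density argument for the test function which the paper leaves implicit.
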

\begin{proof}
\begin{equation}
\int |\D u|^{p-2}\na u\na \zeta=\int\beta(u)\partial_z\zeta-\int f\zeta.
\end{equation}
Let $\zeta=u\phi^p$ where $\phi\ge0$ in $B_1$, $\phi=0$ in $\R^N\setminus B_1$, $\phi=1$ in $B_{\frac34}$, 
and $|\na \phi|\le C$ with some tame constant $C>0$.
Then we have 
\begin{eqnarray}
\int|\na u|^p\phi^p+|\na u|^{p-2}\na u pu\na \phi\phi^{p-1}=\int\beta(u) (\partial_z u \phi^p+pu\phi^{p-1}\partial_z\phi)-\int f u\phi^p.
\end{eqnarray}
Rearranging the order of integrals and after applying the H\"older inequality we get 
\begin{eqnarray}
\int|\na u|^p\phi^p&\leq&\int |\na u|^{p-1} p|u||\na \phi|\phi^{p-1}+\int\beta(u) (|\partial_z u| \phi^p+p|u|\phi^{p-1}|\partial_z\phi|)+ \int |f| |u|\phi^p\\\nonumber
&\le& \frac{\e^{p'}}{p'}\int|\na u|^p\phi^p+\frac1{\e^pp}p^p\int|u|^p|\na \phi|^p+\\\nonumber
&&+\beta(\sup_{B_1}|u|)\left[\frac{\e^p}{p}\int|\na u|^p\phi^p+ \frac1{\e^{p'}p'}\int\phi^{p}+p\int|u|\phi^{p-1}|\na \phi| \right]+ \int |f| |u|\phi^p .
\end{eqnarray}
Here $p'$ is the conjugate of $p$.
Choosing $\e$ small enough we infer the inequality
\begin{equation}
\int|\na u|^p\phi^p\le  \gamma \left[\int|u|^p|\na \phi|^p+\int\phi^{p}+\int|u|\phi^{p-1}|\na \phi| \right]+ \int |f| |u|\phi^p.
\end{equation}
where $\gamma$ depends on $\sup\limits_{B_1}|u|, \sup_{\C_L}|f|, p, a, \ell$ and $N$.
Now the required estimate follows from the fact that $\phi=1$ in $B_{\frac34}$.
\end{proof}

\subsection{Hopf's lemma}
\begin{lem}
Let $u$ be a weak solution of $\Delta_pu-a\partial_zu=0$ in  $B_{r}$, $u\in C(\overline{B_r})$ such that $u\ge 0$ in $B_r$ and $u(X_0)=0$ for some
$X_0\in \partial B_r$. Then 
$$\frac{\partial u(X_0)}{\partial \nu}<0$$
where $\nu$ is the unit outer normal at $X_0$. If the normal derivative does not exist then 
\[\limsup_{X\to X_0}\frac{u(X_0)-u(X)}{|X_0-X|}<0.\]
\end{lem}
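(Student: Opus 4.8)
The plan is to establish Hopf's lemma for the operator $\Delta_p u - a\partial_z u$ by the classical barrier method, adapted to the degenerate setting using a suitable comparison function. First I would reduce to the model situation by translating and rescaling so that the point $X_0$ lies on $\partial B_r$ with $B_r$ replaced by an annular region $A = B_r \setminus \overline{B_{r/2}}$ on which we can build a barrier; the nondegeneracy will follow from the strong maximum principle applied in the interior, which guarantees $u>0$ in $B_r$ unless $u\equiv 0$, so in particular there is $m>0$ with $u\ge m$ on $\partial B_{r/2}$.

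The core step is the construction of a comparison subsolution $\psi$ in the annulus $A$ of the form $\psi(X) = c\big(e^{-\lambda |X - Y_0|^2} - e^{-\lambda r^2}\big)$ where $Y_0$ is the center of $B_r$ (after the reduction $Y_0$ is the origin), $\lambda>0$ is chosen large and $c>0$ small. A direct computation gives $\Delta_p \psi$ in terms of $|\na\psi|^{p-2}$ and the Hessian of $|X|^2$, and one checks that for $\lambda$ sufficiently large (depending on $p$, $N$, $a$, $r$ and the annulus dimensions) one has $\Delta_p\psi - a\partial_z\psi \ge 0$ throughout $A$, i.e.\ $\psi$ is a weak subsolution there; here the key point is that the exponential factor makes the "good" second-order term dominate, exactly as in the linear case, and the degeneracy of $\Delta_p$ only enters through the harmless nonnegative scalar weight $|\na\psi|^{p-2}$ which is bounded above and below away from zero on $A$ since $\na\psi$ does not vanish there. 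One then scales $c$ so that $\psi \le m$ on $\partial B_{r/2}$ and $\psi = 0$ on $\partial B_r$, so that $\psi \le u$ on all of $\partial A$.

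By the comparison principle for the operator $\Delta_p \cdot - a\partial_z\cdot$ — which is available here since $\psi$ is a subsolution, $u$ a supersolution (indeed a solution), and the operator satisfies the structural monotonicity conditions, cf.\ \cite{Serrin-book} Theorem 3.5.1 — we conclude $\psi \le u$ throughout $A$. Since $\psi(X_0) = 0 = u(X_0)$ and $\psi$ is smooth with $\partial\psi(X_0)/\partial\nu < 0$ (a one-line computation: the outward normal derivative of the exponential barrier at the outer sphere is strictly negative), it follows that
\[
\limsup_{X\to X_0}\frac{u(X_0)-u(X)}{|X_0-X|} \le \limsup_{X\to X_0}\frac{\psi(X_0)-\psi(X)}{|X_0-X|} = \frac{\partial\psi(X_0)}{\partial\nu} < 0,
\]
where the limit is taken along the inward radial direction (or any nontangential cone), which yields both asserted forms of the conclusion. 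The main obstacle is verifying that the $p$-Laplacian of the exponential barrier has the right sign on the annulus: one must track the competition between the term coming from $D^2(|X|^2) = 2\,\mathrm{Id}$ (wrong sign after the $e^{-\lambda|X|^2}$ differentiation) and the term coming from $|\na(|X|^2)|^2$, and confirm that the latter wins for large $\lambda$ uniformly on $A$, while also absorbing the first-order term $a\partial_z\psi$; this is routine but is where all the constants are pinned down, and it is essential that $A$ is bounded away from the origin so that $\na\psi \ne 0$ and the computation stays uniformly elliptic.
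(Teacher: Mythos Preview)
Your proposal is correct and follows essentially the same route as the paper: both build the exponential barrier $\psi(X)=c\big(e^{-\lambda|X|^2}-e^{-\lambda r^2}\big)$ on the annulus $B_r\setminus \overline{B_{r/2}}$, verify $\Delta_p\psi-a\partial_z\psi\ge 0$ for large $\lambda$, match the boundary values via $c=\inf_{\partial B_{r/2}}u/(e^{-\lambda r^2/4}-e^{-\lambda r^2})$, and invoke the comparison principle (Theorem~3.5.1 of \cite{Serrin-book}) to conclude. Your write-up is in fact slightly more careful than the paper's, since you explicitly justify $\inf_{\partial B_{r/2}}u>0$ via the strong maximum principle, a point the paper uses tacitly.
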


\begin{proof}

Let $b=\gamma(e^{-\lambda|X|^2}-e^{-\lambda r^2})$
then (see for instance \cite{Aram JDE})
$$\Delta_p b=\gamma\lambda e^{-\lambda |X|^2}\left(2\gamma\lambda e^{-\lambda|X|^2}|X|\right)^{p-2}\left[4\lambda(p-1)|X|^2-2(N+p-2)\right]$$
Hence we have that 
$$\Delta_pb-a\partial_zb =2\gamma\lambda e^{-\lambda |X|^2}\left\{\left(2\gamma\lambda e^{-\lambda|X|^2}|X|\right)^{p-2}\left[2\lambda(p-1)|X|^2-(N+p-2)\right]+az\right\}$$
if we choose $\lambda$ sufficiently large, say $\lambda \ge \frac{2(N+p-2)}{p-1}$ and 
$\gamma=\frac{\inf_{B_{r/2}}u}{e^{-\lambda r^2/4}-e^{-\lambda r^2}}$ then 
we infer that $\Delta_pb-a\partial_zb\ge 0$.
By comparison principle Theorem 3.5.1 \cite{Serrin-book}  we have $b\le u$ and consequently
\[0>\limsup_{X\to X_0}\frac{b(X_0)-b(X)}{|X_0-X|}\ge \limsup_{X\to X_0}\frac{u(X_0)-u(X)}{|X_0-X|}\] 
If the normal derivative exists then this becomes 
$$0>\frac{\partial b}{\partial \nu} \ge \frac{\partial u}{\partial \nu}.$$
\end{proof}

\section*{\footnotesize Acknowledgements}
\footnotesize{The author would like to thank the referee for a number of important comments and suggestions.  }

\end{document}